\documentclass{amsart}

\usepackage{amssymb, amsmath}
\usepackage{mathabx}
\usepackage{graphicx}
\usepackage[utf8]{inputenc}
\usepackage{todonotes}
\usepackage{young}
\usepackage{epstopdf}
\usepackage{mathtools}
\usepackage{diagbox}
\usepackage{xcolor, colortbl}
\usepackage{pifont}

\newtheorem{thm}{Theorem}
\newtheorem{cor}[thm]{Corollary}
\newtheorem{lem}[thm]{Lemma}
\newtheorem{prop}[thm]{Proposition}

\newtheorem{rem}[thm]{Remark}
\newtheorem{ex}{Example}
\numberwithin{equation}{section}

\newcommand{\Z}{\mathbb{Z}}
\newcommand{\N}{\mathbb{N}}
\newcommand{\tq}{\ |\ }
\newcommand{\weyl}{\mathcal{W}}

\newcommand{\xmark}{{\footnotesize\ding{55}}}
\newcommand{\cmark}{\ding{51}}
\newcommand{\cdotsss}{\cdot\!\cdot\!\cdot}

\ysetshade{gray!40}

\begin{document}

\title{Covering relations of k-Grassmannian permutations of type B}

%    author one information
\author{Lonardo Rabelo}
\address{Department of Mathematics, Federal University of Juiz de Fora, Juiz de Fora 36036-900, Minas Gerais, Brazil}
\email{lonardo@ice.ufjf.br}
%\thanks{Supported by FAPESP grant number 08/04628-6}

%    author two information
\author{Jordan Lambert}
\address{Department of Mathematics, Federal University of Juiz de Fora, Juiz de Fora 36036-900, Minas Gerais, Brazil}
\email{jordansilva2008@gmail.com}
%\thanks{Supported by FAPESP grant number 13/10467-3 and 14/27042-8}

% \subjclass is required.
\subjclass[2010]{Primary 05A05, 06A07, 20F55}

\keywords{Permutations, Bruhat order}

\date{}

\begin{abstract}
The main result of this work is the characterization of the covering relations of the Bruhat order of the maximal parabolic quotients of type B. Our approach is mainly combinatorial and is based in the pattern of the corresponding permutations also called signed $k$-Grassmannians permutations. We obtain that a covering relation can be classified in four different pairs of permutations. This answers a question raised by Ikeda and Matsumura providing a nice combinatorial model for maximal parabolic quotients of type B.
\end{abstract}

\maketitle

\section*{Introduction}

This work focuses in the study of the Bruhat order of the maximal parabolic quotients of type $B$. Some of the main papers providing a combinatorial approach to this subject is that of Dheodhar \cite{Dhe} which gives reduced decompositions while Stanley \cite{Sta} characterizes the pattern of the permutations for elements of these quotients. We may also obtain these patterns in the work of Papi \cite{Papi}.

We are interested in obtaining the covering relations in this context. Notice that a covering relation is the occurrence of pairs which are comparable by the Bruhat order and have length difference equals to one. We know that parabolic quotients inherit the Bruhat order of the Weyl group by projection (for details, see Bjorner-Brenti \cite{Bjo} and also Stembridge \cite{stemb}). Furthermore, the covering relations for all classical Weyl groups may be found in the work of Federico \cite{federico} which in particular includes the type B. However, it is not clear the behaviour of these relations in the quotients. 

To answer this question, we use the well-known characterization of these elements in terms of signed $k$-Grassmannian permutations. Once the Weyl group reflections are explicitly given, we investigate how they act over the permutations case-by-case. We conclude that a covering relation can be sorted in four different classes of pair of permutations. It worth noticing that it is possible because we have a nice formula to compute the length of these permutations.

%The main result of this work is the characterization of the covering relations of the Bruhat order of the maximal parabolic quotients of type B. Our approach is mainly combinatorial and is based in the pattern of the corresponding permutations also called $k$-Grassmannians permutations. Once the Weyl group reflections are known, we investigate how they act over the permutations case-by-case. It worth noticing that it is possible because we have a nice formula to compute the length of these permutations. As a conclusion we obtain that a covering relation can be sorted in four different classes of pair of permutations.
% 
Our motivation is geometric by the fact that such quotients parametrize the Schubert varieties that equip the isotropic Grassmannians (real and complex) with a cellular structure. In the particular context of real Grassmannians, the non-zero coefficients for the boundary map of the cellular (co-)homology occur for these covering pairs. This issue is approached in another paper by the same authors \cite{JL}.

According to Buch-Kresch-Tamvakis \cite{BKT}, there is a bijective correspondence between $k$-Grassmannian permutations and the so called $k$-strict partitions which also give certain kind of Young diagrams. This approach has been very useful for several computations in the algebraic geometric setting (for instance, \cite{BKT}). The work of Ikeda-Matsumura \cite{IM} describes the covering relations according to the weak Bruhat order while it leaves open the characterization of these relations for the Bruhat order. In this former work, there is a hint indicating that the model of Maya diagrams could be useful to approach this problem. So, we also present a version of our result in terms of this class of diagrams which indeed gives some symmetry and give a good picture of the pairs for the covering relations. 

This work is organized as follows: In Section \ref{sec:grass} we introduce the main ingredients about the Grassmannian permutations. In Section \ref{sec:bruhat}, we state and prove our main result in the term of the four types of covering pairs. In Section \ref{sec:maya}, we show how the Maya diagrams are useful to provide a nice picture of the covering relations. 

\section{Grassmannian permutations}\label{sec:grass}

We let $\N=\{1,2,3, \dots\}$ and $\Z$ be the set of integers. For $n,m\in\Z$, where $n\leqslant m$, denote the set $[n,m]=\{n,n+1, \dots, m\}$. For $n\in \N$, denote $[n]=[1,n]$.

The Weyl group $\mathcal{W}_{n}$ of type $B$, also called hyperoctahedral group, is generated by $\Sigma=\{s_i:i=0,1, \ldots, n-1\}$ for which we have the following relations
\begin{align*}
s_i^2&=1 \,,\, i\geq 0; \\
s_0s_1s_0s_1 &= s_1s_0s_1s_0; \\
s_{i+1}s_i s_{i+1} &= s_i s_{i+1}s_i\,,\, 1\leqslant i< n-1;  \\
s_is_j &= s_js_i\,,\, |i-j|\geqslant 2.
\end{align*}

The length $\ell(w)$ of $w \in \mathcal{W}_{n}$ is the minimal number of $s_i$'s in a decomposition of $w$ in terms of the generators. In this case, we say that this is a reduced decomposition of $w$. Each $s_i$, $i\geq 0$, is called a simple reflection. The other reflections are those conjugate to some $s_i$. 

There is a partial order in $\mathcal{W}_{n}$ called the Bruhat-Chevalley order. We say that $w'\leq w$ if given a reduced decomposition $w=s_{i_1}\!\cdots s_{i_{\ell(w)}}$ then $w'=s_{i_{j_{1}}}\!\!\cdots s_{i_{j_{k}}}$ for some indices $1\leq j_{1}<\cdots <j_{k} \leq \ell(w)$ (this is called the ``Subword Property''). It is known that $\mathcal{W}_{n}$ has a maximum element $w_0$ which is an involution, i.e, $w_0^2=1$.

Let $w,w'  \in \mathcal{W}_{n}$ with $\ell(w)=\ell(w')+1$, i.e., if $w=s_{i_{1}}\!\cdots s_{i_{\ell(w)}}$ is reduced decomposition then $w'=s_{i_{1}}\!\cdots \widehat{s_{i_{j}}}\! \cdots s_{i_{\ell(w)}}$ is a reduced decomposition as well. By the Exchange Property of Coxeter groups, there is a reflection $t \in \mathcal{W}_{n}$ (not necessarily simple) such that $w'=wt$ (see \cite{Bjo}, Theorem 2.2.2). In this case, we say that it is a covering relation where $w$ covers $w'$.

Consider the set of all barred permutations $w$ of the form
\begin{align*}
\overline{n},\overline{n-1}, \ldots, \overline{1},0,1, \ldots, n-1, n
\end{align*} 
using the bar to denote a negative sign, and we take the natural order on them, as above. The hyperoctahedral group $\weyl_{n}$ is composed by the barred permutations subject to the relation $\overline{w(i)}=w(\overline{i})$, for all $i$. Then, it is usual to denote $w$ in one-line notation by the sequence $w(1)\cdots \, w(n)$ of positive positions. However, we also could use the full description of $w$ with the corresponding negative reflections, if it require so.

The simple reflections are 
\begin{align*}
s_0&=(\overline{1},1);\\
s_i&=(\overline{i+1},\overline{i})(i,i+1)\, , \, \mbox{ for all } i\geq 1.
\end{align*}

If we think the elements of $\mathcal{W}_{n}$ acting at right over the permutations, we have that $s_0$ changes the sign in the first position and $s_i$ changes the entries in the positions $i$ and $i+1$, i.e., in the one-line notation
\begin{align}
w(1)w(2)\cdots w(n) \cdot s_0 &= \overline{w(1)} w(2)\cdots w(n); \\
w(1)\cdots w(i) w(i+1) \cdots w(n) \cdot s_i &= w(1)\cdots w(i+1) w(i) \cdots w(n).
\end{align}

By Proposition 8.1.5 of \cite{Bjo}, the set of reflections is determined by
\begin{align}\label{eq:reflections}
\{(i,j)(\overline{i},\overline{j}) \mid 1\leq i< |j| \leq n\} \bigcup \{ (i,\overline{i}) \mid 1\leq i \leq n\}.
\end{align}

Hence the action of a reflection $(i,j)(\overline{i},\overline{j})$, for some $1\leq i< |j| \leq n$, will permute the entries in positions $i$ and $j$. The reflection $(i,\overline{i})$, for some $1\leq i \leq n$, changes the sign at $i$-th position.

The length of $w \in \mathcal{W}_{n}$ is given by the following formula (see \cite{Bjo}, Eq. (8.3))
\begin{align}\label{eq:length}
\ell(w) = \mathrm{inv}(w(1),\ldots, w(n)) - \sum_{ \mathclap{\{j\mid w(j)<0\}}}\ w(j)
\end{align}
where 
\begin{align*}
\mathrm{inv}(w(1),\ldots,w(n)) = \# \{(i,j)\mid 1\leq i<j\leq n, w(i)>w(j)\}. 
\end{align*}

The following result will be very useful in the sequel about the covering relations in $\mathcal{W}_{n}$ and the Exchange Property stated in terms of the reflections given by (\ref{eq:reflections}).

\begin{prop}[\cite{Bjo}, Prop. 8.1.6]\label{prop:bruhatBjor}
Let $w,w' \in \weyl_{n}$. Then, the following are equivalent
\begin{enumerate}
\item[(i)] $w'\leqslant w$ with $\ell(w)=\ell(w')+1$.
\item[(ii)] There exist $\overline{n} \leqslant i < j \leqslant n$, such that $w(i)>w(j)$ and
\begin{itemize}
\item $w'=w\cdot (i,j)(-i,-j)$, if $|i|\neq |j|$, or
\item $w'=w\cdot (i,j)$, if $|i|=|j|$.
\end{itemize}
\end{enumerate}
\end{prop}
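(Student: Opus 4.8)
The plan is to derive the statement from the length formula (\ref{eq:length}) together with the general mechanism of covering relations in Coxeter groups recalled above; the argument splits into a soft step and a computational step.

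\emph{Reduction to a length computation.} First I would reformulate (i). By the Exchange Property (\cite{Bjo}, Thm.~2.2.2, already invoked above), a relation $w'\leqslant w$ with $\ell(w)=\ell(w')+1$ is the same thing as a relation $w'=wt$ with $t$ a reflection and $\ell(wt)=\ell(w)-1$; conversely, for an arbitrary reflection $t$ one always has $\ell(wt)\neq\ell(w)$, and $\ell(wt)<\ell(w)$ already forces $wt\leqslant w$. Hence (i) is equivalent to: \emph{$w'=wt$ for some reflection $t$ with $\ell(wt)=\ell(w)-1$.} By (\ref{eq:reflections}), every reflection is a paired transposition $(i,j)(\overline i,\overline j)$ with $1\leqslant i<|j|\leqslant n$ or a sign change $(i,\overline i)$ with $1\leqslant i\leqslant n$; rewriting each of these in the symmetric form $t_{ij}$, $\overline n\leqslant i<j\leqslant n$, of item (ii) — so that $|i|\neq|j|$ in the first case and $j=-i$ in the second — the problem becomes: for such a $t_{ij}$, determine exactly when $\ell(wt_{ij})=\ell(w)-1$ and check that this is governed by the inequality $w(i)>w(j)$ appearing in (ii).

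\emph{The length computation.} Using $w(\overline k)=\overline{w(k)}$, right multiplication by a paired transposition $t_{ij}$ interchanges the entries of the one-line notation in positions $i$ and $j$ (and correspondingly in $\overline i$ and $\overline j$), while right multiplication by a sign change $t_{i\overline i}$ negates the entry in positions $\pm i$. Substituting $wt_{ij}$ into (\ref{eq:length}) and subtracting, $\ell(wt_{ij})-\ell(w)$ is the change of $\mathrm{inv}(w(1),\dots,w(n))$ plus the change of $-\sum_{w(k)<0}w(k)$. I would organize the computation by the signs of $i$ and $j$:
\begin{itemize}
\item[(a)] $1\leqslant i<j\leqslant n$, a transposition of two positive positions: the second term of (\ref{eq:length}) is unaffected and one is in the familiar type $A$ situation for the window $w(1),\dots,w(n)$;
\item[(b)] $i<0<j$, positions of opposite sign: the interchange also changes signs, so both terms of (\ref{eq:length}) move, and the inversions involving the positions lying strictly between $i$ and $j$ — among which the position $0$, with $w(0)=0$, may occur — have to be counted carefully;
\item[(c)] $j=-i$, the sign changes: negating a single entry changes both terms of (\ref{eq:length}) at once.
\end{itemize}
In each case one verifies that $\ell(wt_{ij})<\ell(w)$ holds precisely when $w(i)>w(j)$ — the inequality of (ii) — and, sharpening this, that $\ell(wt_{ij})=\ell(w)-1$ holds precisely when, in addition, no position strictly between $i$ and $j$ carries a value strictly between $w(j)$ and $w(i)$ (in case (c) this local condition says that no entry to the left of position $j$ has absolute value smaller than $|w(j)|$). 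Assembling cases (a)--(c) yields the proposition.

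\emph{Main obstacle.} The reduction step is routine Coxeter theory; the real work is concentrated in cases (b) and (c). The delicate points are the bookkeeping of the inversions created and destroyed among the negative positions (and the position $0$) when one interchanges positions of opposite sign, and, for the sign changes, showing that the increase of $\mathrm{inv}(w(1),\dots,w(n))$ and the decrease of $-\sum_{w(k)<0}w(k)$ combine into a net change of exactly $-1$ precisely under the stated local condition, and into a strictly more negative change otherwise. A conceptual alternative, which I would also use to double-check, is to run the whole argument through the Bruhat-order-preserving inclusion of $\weyl_n$ into the symmetric group on $\{\overline n,\dots,\overline 1,0,1,\dots,n\}$ as the signed permutations: there the paired transpositions become products of two disjoint ordinary transpositions and the sign changes become single transpositions, so that cases (a)--(c) all follow from the type $A$ covering criterion — at the price of first checking that the Bruhat order really does restrict along that inclusion.
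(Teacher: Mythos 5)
The paper itself offers no proof of this proposition: it is quoted from Bj\"orner--Brenti (Prop.\ 8.1.6), so there is no in-paper argument to compare yours against. Your overall strategy --- reduce (i) to ``$w'=wt$ for a reflection $t$ with $\ell(wt)=\ell(w)-1$'' via the Exchange Property and the fact that $\ell(wt)<\ell(w)$ forces $wt<w$, then compute $\ell(wt)-\ell(w)$ from the explicit length formula \eqref{eq:length} case by case --- is the standard route and the reduction step is sound. But the proposal has two problems. First, the entire content of the proposition lives in the case analysis (b)--(c), which you leave at the level of ``one verifies''; the bookkeeping of inversions across position $0$ and for sign changes is exactly where the work is, and it is not done.

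Second, and more seriously, your own analysis contradicts your conclusion. You correctly observe that $w(i)>w(j)$ only yields $\ell(wt_{ij})<\ell(w)$, and that $\ell(wt_{ij})=\ell(w)-1$ requires an \emph{additional} ``no intermediate value'' condition --- yet you then assert that ``assembling cases (a)--(c) yields the proposition.'' It cannot, because the implication (ii)$\Rightarrow$(i) is false as the statement is written: take $n=3$, $w=3\,2\,1$ (all entries positive), $i=1$, $j=3$; then $w(1)=3>1=w(3)$ and $w'=w\cdot(1,3)(\overline{1},\overline{3})=1\,2\,3$, so (ii) holds, but $\ell(w)=3$ and $\ell(w')=0$, so (i) fails. (The statement as quoted in the paper omits the extra clauses present in the actual Proposition 8.1.6 of Bj\"orner--Brenti; only the direction (i)$\Rightarrow$(ii) is used in the proof of Theorem \ref{thm:main}, where the length condition is re-verified by hand in each case, which is why the omission does not propagate.) A correct write-up must either prove the amended statement with the no-intermediate-value conditions made explicit --- and in case (c) these must be formulated so as to handle the position $0$ correctly, since $w(0)=0$ always lies strictly between $w(i)$ and $w(-i)$ for a sign change and yet sign changes can be coverings --- or prove only the implication (i)$\Rightarrow$(ii) and flag that the converse requires more.
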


For each $0\leq k\leq n$, define the set $(k)=\Sigma-\{s_{k}\}$ of simple reflections without $s_{k}$. The corresponding (parabolic) subgroup $\mathcal{W}_{(k)}$ is generated by $s_i$, with $i\neq k$. Notice that $\mathcal{W}_{(k)}\cong \mathcal{W}_k \times S_{n-k}$, where $\mathcal{W}_k$ is the subgroup generated by $s_i$, $0\leq i \leq k$. 
For any $w=w(1)\cdots w(n)$, it follows that its coset $w\mathcal{W}_{(k)}$ is composed by permutations with its first $k$ entries permuted with signs changed -- corresponding to the $\mathcal{W}_k$ part -- together with the permutations with the remaining $(n-k)$ permuted -- but without change of signs corresponding to the $S_{n-k}$ part. 
 
The set of the minimum-length coset representatives for $\mathcal{W}_{n}/\mathcal{W}_{(k)}$ is defined by
\begin{align*}
\mathcal{W}^{(k)}_{n} = \{ w \in \mathcal{W}_{n} \mid \ell(w)<\ell(ws_i), \forall i\geq 0, i\neq k\}.
\end{align*}
Indeed, there exists a unique minimal length element in each coset $w\weyl_{(k)}$.

Now, we can give an explicit description of these representatives according to the above description of the cosets $w\mathcal{W}_{(k)}$. In fact, by Equation (\ref{eq:length}), we must seek inside each coset the elements with minimal number of inversions and negative numbers. In the first $k$ entries, we always may take only positives elements ordered since it gives the least contribution to the length while for the remaining $n-k$ entries we only order them since we cannot avoid negative entries. So, we can conclude that the one-line notation of $w$ can be identified by the form
\begin{align}\label{eq:kgrass}
w = w_{u,\lambda} = u_1\, \cdots u_k | \overline{\lambda_r}\, \cdots\, \overline{\lambda_1}\, v_1\, \cdots \, v_{n-k-r}
\end{align}
where 
\begin{align}\label{eq:kgrass_conds}
0<u_1<\cdots <u_k\ , &&& u_i=w(i), \mbox{ for } 1\leq i  \leq k; \nonumber\\
0<\lambda_1 < \cdots < \lambda_r\ ,  &&& \overline{\lambda_i} = w(k+r-i+1), \mbox{ for } 1\leq i \leq r; \\
0<v_1 <\cdots < v_{n-k-r}\ ,  &&& v_i = w(k+r+i), \mbox{ for } 1\leq i \leq n-k-r. \nonumber
\end{align}

They are called $k$-Grassmannian permutations.
\begin{rem} Proposition \ref{prop:bruhatBjor} still holds for elements of $\mathcal{W}^{(k)}_{n}$ since $\mathcal{W}^{(k)}_{n} \subset \mathcal{W}_{n}$ and the projection $\pi: \mathcal{W}_{n}\rightarrow \mathcal{W}^{(k)}_{n}$ preserves the Bruhat order (see \cite{stemb}, Prop. 1.1).
\end{rem}
We now define a pair of double partitions $\alpha$  and $\lambda$ associated with each $k$-Grassmannian permutation $w$ given by Equation (\ref{eq:kgrass}). The negative part of $w$ provides us a strict partition $\lambda =(\lambda_{r}>\cdots >\lambda_{1}>0)$. For each $i$, $0< i\leqslant k$, we define
\begin{align}\label{eq:alpha1}
\alpha_i = u_i - i + d_i \, , \, \mbox{ with } d_i = \# \{\lambda_{j} \mid \lambda_j > u_i\}.
\end{align}

We claim that $\alpha=(n-k\geq \alpha_k\geq\alpha_{k-1}\geq \cdots \geq \alpha_1\geq0)$ is a partition.
Indeed, for each $i$, we may collect all indexes that are greater than $u_i$ that appear in the permutation (\ref{eq:kgrass}) accordingly to the position they occupy by the following formula
\begin{align*}
[n] - [u_{i}]=\{u_{j} \mid u_j>u_i \} \cup \{\lambda_{j} \mid \lambda_j >u_i\} \cup \{v_{j} \mid v_j >u_i\}
\end{align*}
where the cardinality is given by
\begin{align}\label{eq:lessthanui}
u_i = n- \#\{u_{j} \mid u_j>u_i \} - \#\{\lambda_{j} \mid \lambda_j >u_i\} - \#\{v_{j} \mid v_j >u_i\}.
\end{align}
Now, we observe that $\#\{u_{j} \mid u_j>u_i \}=k-i$. It follows that Equation (\ref{eq:lessthanui}) is equivalent to
\begin{align}\label{eq:ui}
u_i = n-k+i- d_{i} -\mu_{i}.
\end{align}
where $\mu_{i}=\#\{v_{j}\mid v_j>u_i\}$. Then, Equation (\ref{eq:alpha1}) may be rewritten as 
\begin{align}\label{eq:alpha3}
\alpha_i = n-k - \mu_{i}.
\end{align}
It is clear now from (\ref{eq:alpha3}) that $\alpha$ is a partition inside a $k\times (n-k)$ rectangle.  Denote $|\alpha|=\sum_{i=1}^{k} \alpha_i$ and $|\lambda|=\sum_{i=1}^{r} \lambda_i$. Observe that $\mu_{i}=\mu_{i}(w)$ also depends on the choice of $w$.

\begin{lem}\label{lem:dim} Let $w \in \mathcal{W}_{n}^{(k)}$. The length $\ell(w)$ of $w$ is given by the sum of entries the pair of double partition $\alpha, \lambda$, i.e., 
\begin{align*}
\ell(w) = |\alpha|+|\lambda|.
\end{align*}
\end{lem}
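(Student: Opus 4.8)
The plan is to start from the length formula (\ref{eq:length}) specialized to a $k$-Grassmannian permutation $w = w_{u,\lambda}$ written in the form (\ref{eq:kgrass}), and to split the inversion count and the negative-sum term into pieces that we can match against $|\alpha|$ and $|\lambda|$. The negative entries of $w$ are exactly $\overline{\lambda_1}, \dots, \overline{\lambda_r}$, so $-\sum_{\{j \mid w(j)<0\}} w(j) = \sum_{i=1}^{r} \lambda_i = |\lambda|$. Thus it remains to show $\mathrm{inv}(w(1),\dots,w(n)) = |\alpha|$. Using (\ref{eq:alpha3}), $|\alpha| = \sum_{i=1}^{k}(n-k-\mu_i)$ where $\mu_i = \#\{v_j \mid v_j > u_i\}$, so the target identity becomes $\mathrm{inv}(w) = \sum_{i=1}^{k}(n-k) - \sum_{i=1}^{k}\mu_i = k(n-k) - \sum_{i=1}^k \mu_i$.

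Next I would count the inversions of $w$ directly by classifying the pairs $(a,b)$ with $a<b$ and $w(a)>w(b)$ according to which of the three blocks (the $u$-block in positions $1,\dots,k$; the $\overline{\lambda}$-block in positions $k+1,\dots,k+r$; the $v$-block in positions $k+r+1,\dots,n$) the positions $a$ and $b$ lie in. Within each block the entries are increasing (the $u_i$ increase, the sequence $\overline{\lambda_r},\dots,\overline{\lambda_1}$ increases since $\lambda$ is strictly decreasing when read that way, and the $v_i$ increase), so no inversions occur with both positions in the same block. Between the $\overline{\lambda}$-block and the $v$-block there are no inversions since every $\overline{\lambda_i}<0<v_j$. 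So the only contributions come from (I) pairs with $a$ in the $u$-block and $b$ in the $\overline{\lambda}$-block, and (II) pairs with $a$ in the $u$-block and $b$ in the $v$-block. Block (I) contributes $r$ inversions for each $i \in [k]$ (every positive $u_i$ beats every negative $\overline{\lambda_j}$), giving $kr$ total. Block (II) contributes, for each $i$, the number of $j$ with $v_j < u_i$, which is $(n-k-r) - \mu_i$. Summing over $i$ gives $\mathrm{inv}(w) = kr + \sum_{i=1}^{k}\big((n-k-r) - \mu_i\big) = kr + k(n-k-r) - \sum_{i=1}^{k}\mu_i = k(n-k) - \sum_{i=1}^{k}\mu_i$, which is exactly $|\alpha|$.

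Combining the two computations yields $\ell(w) = \mathrm{inv}(w) + |\lambda| = |\alpha| + |\lambda|$, as claimed. The only real bookkeeping obstacle is making sure the within-block monotonicity claims and the between-block sign comparisons are stated carefully — in particular that $\overline{\lambda_r} < \cdots < \overline{\lambda_1}$ really is increasing left to right in positions $k+1,\dots,k+r$ — and that the count $\#\{j : v_j < u_i\} = (n-k-r) - \mu_i$ is justified from the definition of $\mu_i$; everything else is elementary arithmetic. Alternatively, one can avoid the explicit inversion count by invoking (\ref{eq:ui}), which already packages $u_i = n-k+i - d_i - \mu_i$, and then reconcile the standard length formula block-by-block with the right-hand side of (\ref{eq:alpha1}); but the direct inversion count above seems cleaner and more self-contained.
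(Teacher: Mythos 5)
Your proof is correct and follows essentially the same route as the paper: the negative-entry sum gives $|\lambda|$, and the inversions are counted per $u_i$ as $r + (n-k-r) - \mu_i = n-k-\mu_i = \alpha_i$, exactly as in the paper's argument (you merely spell out the within-block monotonicity and the $\overline{\lambda}$-versus-$v$ comparisons that the paper compresses into ``there is a unique descent in position $k$''). No changes needed.
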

\begin{proof}
The sum of $\lambda$'s corresponds to the sum $- \sum_{ \{j\mid w(j)<0\} } w(j)$ in Equation (\ref{eq:length}). It remains to show that the inversions of $(w(1),\ldots, w(n))$ are given by the sum of the $\alpha$'s. Since there is a unique descent in position $k$, all inversions corresponds to inversions among the $u_i$'s with all $\lambda$'s and some of $v_j$'s. Then, for each $1\leqslant i \leqslant k$, we have that the number of inversion related to $u_{i}$ is $n-k-\#\{v_{j}\tq u_{i}< v_{j}\}=\alpha_{i}$, by Equation (\ref{eq:alpha3}). Hence, $\mathrm{inv}(w(1),\ldots,w(n)) = |\alpha|$.
\end{proof}

\section{Bruhat order of Grassmannian permutations}\label{sec:bruhat}

Let $w$ and $w'$ be permutations in $\weyl^{(k)}_{n}$ written in one-line notation as
\begin{align*}
w &= u_1\, \cdots\, u_k| \overline{\lambda_r}\, \cdots\, \overline{\lambda_1}\, v_1 \,\cdots\, v_{n-k-r}, \\
w' &= u_1'\, \cdots\, u_k'| \overline{\lambda_{r'}'}\, \cdots\, \overline{\lambda_1'}\, v_1' \,\cdots\, v_{n-k-r'}'.
\end{align*}
Denote by $\alpha,\lambda$ and $\alpha',\lambda'$ the pair of partitions associated with $w$ and $w'$, respectively. Also denote $\mu_{i}=\mu_{i}(w)$ and $\mu'_{i}=\mu_{i}(w')$.

We call:

\begin{itemize}
\item $w,w'$ a pair of \emph{type B1} if
\begin{align*}
w &= \cdots\, |\, \cdots\,  \overline{1}\,  \cdots \, & \mbox{ and} &&
w' &= \cdots\, |\, \cdots\,  1\,  \cdots.
\end{align*}
In other words, we choose $w$ such that $\lambda_{1}=1$, and $w'$ is obtained from $w$ by removing the negative sign from $\overline{1}$.

\vspace*{0.2cm}
\item $w,w'$ a pair of \emph{type B2} if
\begin{align*}
w &= \cdots\, |\, \cdots\,  \overline{a}\,  \cdots \, (a-1)\, \cdots \,& \mbox{ and} &&
w' &= \cdots\, |\, \cdots\, \overline{a-1}\,  \cdots \, a\, \cdots,
\end{align*}
where $a>0$. In other words, there are $t\in [r]$ and $q \in [n-k-r]$ such that $\lambda_{t}=a$ and $v_{q}=a-1$, and $w'$ is obtained from $w$ by switching $v_{q}$ and $\lambda_{t}$.

\vspace*{0.2cm}
\item $w,w'$ a pair of \emph{type B3} if
\begin{align*}
w &= \cdots\, a \,\cdots\, |\,  \cdots \, (a-x)\, \cdots \, & \mbox{ and} &&
w' &= \cdots\,  (a-x) \,\cdots \,|\, \cdots\, a\, \cdots,
\end{align*}
where $a > x > 0$. In other words, there are $p\in [k]$ and  $q \in [n-k-r]$ such that $u_{p}=a$ and $v_{q}=a-x$. The permutation $w'$ is obtained from $w$ by switching $u_{p}$ and $v_{q}$.
%where $a > x \geqslant 1$. In other words, there are $p\in [k]$, $q \in [n-k-r]$, and $t\in [r]$ such that $u_{p}=a$, $v_{q}=a-x$, and $\lambda_{t}=a-x+1, \lambda_{t+1}=a-x+2, \dots, \lambda_{t+x-2}=a-1$. The permutation $w'$ is obtained from $w$ by switching $u_{p}$ and $v_{q}$.

\vspace*{0.2cm}
\item $w,w'$ a pair of \emph{type B4} if
\begin{align*}
w &= \cdots\, (a-x) \,\cdots\, |\,  \cdots \, \overline{a}\, \cdots \, & \mbox{ and} &&
w' &= \cdots\,  a \,\cdots \,|\, \cdots\, \overline{a-x}\, \cdots,
\end{align*}
where $a > x  > 0$. In other words, there are $p\in [k]$ and $t \in [r]$ such that $u_{p}=a-x$ and $\lambda_{t}=a$. The permutation $w'$ is obtained from $w$ by switching $u_{p}$ and $\lambda_{t}$.

\end{itemize}

The following lemma states a property for pairs of type B3 and B4.

\begin{lem}\label{lem:B3B4}
Let $w,w'\in \weyl_{n}^{(k)}$.
\begin{enumerate}
\item [i] If $w,w'$ is a pair of type B3 then all positive values $a-x+1, a-x+2, \dots, a-1$ should belong to the $\lambda$'s, i.e., there is $t\in[r]$ such that $\lambda_{t}=a-x+1, \lambda_{t+1}=a-x+2, \dots, \lambda_{t+x-2}=a-1$;
\item [ii] If $w,w'$ is a pair of type B4 then all positive values $a-x+1, a-x+2, \dots, a-1$ should belong to the $v$'s, i.e., there is $q \in [n-k-r]$ such that $v_{q}=a-x+1, v_{q+1}=a-x+2, \dots, v_{q+x-2}=a-1$.
\end{enumerate}
\end{lem}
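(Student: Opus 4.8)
The plan is to derive the consecutiveness statement directly from the single fact that \emph{both} $w$ and $w'$ lie in $\weyl_{n}^{(k)}$, so that both one-line notations have the sorted shape (\ref{eq:kgrass}); no use of the length or covering hypothesis is needed. In both cases the argument runs the same way: the switch producing $w'$ forces two inequalities on the blocks of $w$, these inequalities forbid the intermediate values $a-x+1,\dots,a-1$ from lying in two of the three blocks $\{u_{i}\}$, $\{\lambda_{i}\}$, $\{v_{j}\}$, and the strict monotonicity of the surviving block turns an interval of integers into a run of consecutive parts.

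For part (i), I would first write $w'$ out explicitly: switching $u_{p}=a$ with $v_{q}=a-x$ yields
\[
w' = u_{1}\cdots u_{p-1}\,(a-x)\,u_{p+1}\cdots u_{k} \mid \overline{\lambda_{r}}\cdots\overline{\lambda_{1}}\,v_{1}\cdots v_{q-1}\,a\,v_{q+1}\cdots v_{n-k-r}.
\]
For this to have the form (\ref{eq:kgrass}), the first block must be strictly increasing, which forces $u_{p-1}<a-x$ (when $p>1$), and the positive tail of the last block must be strictly increasing, which forces $a<v_{q+1}$ (when $q<n-k-r$). The inequality $u_{p-1}<a-x$ rules out any $u_{i}$ in the open interval $(a-x,a)$, since the $u_{i}$ with $i<p$ are $\leqslant u_{p-1}<a-x$ and those with $i>p$ are $\geqslant u_{p+1}>u_{p}=a$; likewise $a<v_{q+1}$ rules out any $v_{j}$ in $(a-x,a)$. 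Since $\{u_{i}\}\cup\{\lambda_{i}\}\cup\{v_{j}\}=[n]$, each of $a-x+1,\dots,a-1$ is therefore some $\lambda_{i}$, and because $\lambda_{1}<\cdots<\lambda_{r}$ are integers the index $t$ with $\lambda_{t}=a-x+1$ satisfies $\lambda_{t+1}=a-x+2,\ \dots,\ \lambda_{t+x-2}=a-1$.

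Part (ii) is the mirror image. Switching $u_{p}=a-x$ with $\lambda_{t}=a$ produces
\[
w' = u_{1}\cdots u_{p-1}\,a\,u_{p+1}\cdots u_{k} \mid \overline{\lambda_{r}}\cdots\overline{\lambda_{t+1}}\,\overline{(a-x)}\,\overline{\lambda_{t-1}}\cdots\overline{\lambda_{1}}\,v_{1}\cdots v_{n-k-r},
\]
and requiring form (\ref{eq:kgrass}) forces $a<u_{p+1}$ (when $p<k$) from the first block and $\lambda_{t-1}<a-x$ (when $t>1$) from the strict decrease of the negative entries. Now $a<u_{p+1}$ forbids any $u_{i}$ in $(a-x,a)$ and $\lambda_{t-1}<a-x$ forbids any $\lambda_{i}$ in $(a-x,a)$, so each of $a-x+1,\dots,a-1$ must be some $v_{j}$; strict monotonicity of the $v_{j}$ then gives $v_{q}=a-x+1,\ \dots,\ v_{q+x-2}=a-1$ for the index $q$ with $v_{q}=a-x+1$.

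I do not expect a real obstacle here. The only points requiring care are the degenerate indices --- $p\in\{1,k\}$, $t\in\{1,r\}$, $q\in\{1,n-k-r\}$, where one of the two inequalities becomes vacuous or reduces to a trivial positivity statement --- and the case $x=1$, where the interval $\{a-x+1,\dots,a-1\}$ is empty and the claim is vacuous.
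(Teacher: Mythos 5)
Your proposal is correct and follows essentially the same route as the paper: both arguments use the strict monotonicity of the blocks of $w$ and of $w'$ (as elements of $\weyl_{n}^{(k)}$) to exclude the intermediate values $a-x+1,\dots,a-1$ from two of the three blocks, and then place them in the remaining block by the partition $\{u_i\}\cup\{\lambda_i\}\cup\{v_j\}=[n]$. The only difference is cosmetic --- the paper phrases the exclusion as a contradiction on indices while you derive the equivalent inequalities $u_{p-1}<a-x$, $a<v_{q+1}$ (resp.\ $a<u_{p+1}$, $\lambda_{t-1}<a-x$) directly.
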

\begin{proof} Let $w,w'$ be a pair of type $B3$. Let $p\in [k]$ be such that $u_{p}=a$. Suppose there is $i\in [k]$ such that $i<p$ and $u_{i}\in [a-x+1,a-1]$. After swapping $a$ and $a-x$, we have that $u_{p}'=a-x$ while $u_{i}'$ still belongs to $[a-x+1,a-1]$. In other words, $u_{i}'>u_{p}'$ for $i<p$, which is impossible by Equations \eqref{eq:kgrass_conds}. Now, suppose there is $i\in [k]$ such that $p<i$ and $u_{i}\in [a-x+1,a-1]$. It gives that $u_p>u_i$ for $p<i$, which is not possible by Equations (\ref{eq:kgrass_conds}). Hence, there is no $i\in [k]$ such that $u_{i}\in [a-x+1,a-1]$. Likewise, we prove that there is no $j\in [n-k-r]$ such that $v_{j}\in [a-x+1,a-1]$. Hence, we have statement (1).

We use the same idea to prove statement (2).
\end{proof}

To define the previous four types of pairs, we only require that both $w$ and $w'$ belong to $\weyl_{n}^{(k)}$. In principle, it is not clear the relationship between them. This is the content of our main theorem.

\begin{thm}\label{thm:main}
Let $w,w'\in \weyl_{n}^{(k)}$. Then $w$ covers $ w'$ if, and only if, $w,w'$ is a pair of type B1, B2, B3 or B4.
\end{thm}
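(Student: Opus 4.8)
The plan is to prove the two implications separately, both resting on Proposition~\ref{prop:bruhatBjor} and the length formula of Lemma~\ref{lem:dim}. For ``$(\Leftarrow)$'' I would treat the four types one at a time. In each case one first exhibits the reflection $t$ from \eqref{eq:reflections} with $w'=wt$: for B1 it is the sign change at the position carrying $\overline 1$ (namely position $k+r$); for B3 the plain transposition of the positions of $u_p$ and $v_q$; for B2 and B4 a reflection $(i,j)(\overline i,\overline j)$ with $j<0$, which transposes the two relevant positions and negates both entries involved. Then one produces indices $\overline n\leqslant i<j\leqslant n$ with $w(i)>w(j)$ realising this $t$, so that $w'\leqslant w$ by Proposition~\ref{prop:bruhatBjor}, and finally checks $\ell(w)=\ell(w')+1$ through Lemma~\ref{lem:dim}. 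The change in $|\lambda|$ follows at once from the definitions ($-1$ for B1 and B2, $-x$ for B4, $0$ for B3), while the change in $|\alpha|=\sum_i(n-k-\mu_i)$ is obtained by counting, for each $i$, the $v$-entries exceeding $u_i$ before and after the move; for B3 and B4 this is precisely where Lemma~\ref{lem:B3B4} is used, since it forces $a-x+1,\dots,a-1$ to lie among the $\lambda$'s (resp.\ the $v$'s), which determines $\mu_p$ and makes the net length change equal $1$.

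For ``$(\Rightarrow)$'', suppose $w$ covers $w'$ in $\weyl_n^{(k)}$. Minimal coset representatives have the same length in the quotient as in $\weyl_n$, and the projection preserves the Bruhat order, so $w$ covers $w'$ in $\weyl_n$ as well, and Proposition~\ref{prop:bruhatBjor} supplies $\overline n\leqslant i<j\leqslant n$ with $w(i)>w(j)$ and $w'=wt$ for the associated reflection. By the relation $w(\overline m)=\overline{w(m)}$, the action of $t$ on the one-line notation of $w$ is of exactly one of three kinds: (a) transpose two positions, when $0<i<j$ (the case $i<j<0$ reduces to this); (b) transpose two positions and negate both entries involved, when $i<0<j$ and $|i|\neq|j|$; (c) negate a single position, when $|i|=|j|$. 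I would then split into cases according to which of the blocks $[1,k]$, $[k+1,k+r]$, $[k+r+1,n]$ of $w$ the affected positions lie in.

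The engine of the argument is that ``$w(i)>w(j)$'' together with ``$w'=wt$ is again of the normal form \eqref{eq:kgrass}'' eliminates almost every configuration. A type-(a) transposition carrying a (negative) middle-block entry into one of the first $k$ slots is impossible, and otherwise a type-(a) transposition either lies inside an increasing run (no inversion) or is the first-block/third-block exchange of $u_p>v_q$; the latter is exactly type B3, with Lemma~\ref{lem:B3B4}(i) forced because a $u$- or $v$-entry strictly between $v_q$ and $u_p$ would break the monotonicity required of $w'$. A type-(b) move on two entries of a single positive block, or one from each positive block, contradicts $w(i)>w(j)$; on two middle-block entries it produces a normal form whose length is too small; the remaining two configurations are precisely type B4 (first block with middle block) and type B2 (middle block with third block). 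A type-(c) move must negate a middle-block position, the normal form forces it to be the last one, carrying $\overline{\lambda_1}$, and then $\ell(w')=\ell(w)-1$ via Lemma~\ref{lem:dim} forces $\lambda_1=1$, i.e.\ type B1. In each surviving configuration the same length computation pins down the remaining data: $x=1$ in B2, and the ``intermediate values'' statement, i.e.\ Lemma~\ref{lem:B3B4}, in B3 and B4.

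The genuinely laborious step is expected to be this converse case analysis: confirming that the split over the three kinds of reflection and over the block memberships of the affected positions is exhaustive, and, in every surviving configuration, computing correctly how $\mu_i(w)$ --- equivalently $|\alpha|$ --- changes, since $\ell(w)-\ell(w')$ is controlled precisely by how many $u$-, $v$- and $\lambda$-entries occupy the open interval between $a-x$ and $a$; an off-by-one there is exactly what separates a covering relation from a longer Bruhat interval. The ``$(\Leftarrow)$'' direction, in contrast, is a bounded list of explicit verifications.
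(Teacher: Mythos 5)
Your proposal follows essentially the same route as the paper: both directions rest on Proposition~\ref{prop:bruhatBjor} together with the length formula $\ell(w)=|\alpha|+|\lambda|$ of Lemma~\ref{lem:dim}, the converse is an exhaustive case analysis over which blocks ($u$'s, $\lambda$'s, $v$'s, and their negatives) the two affected positions lie in, and in each surviving configuration the change in $|\alpha|$ is tracked through the $\mu_i$'s with Lemma~\ref{lem:B3B4} supplying the ``intermediate values'' constraint --- exactly the paper's five shaded cases, including the elimination of the middle-block/middle-block transposition and the forcing of $x=1$ (type B2) and $a=1$ (type B1). Your bookkeeping of the changes in $|\lambda|$ and $|\alpha|$ for each type agrees with the paper's computations, so the plan is sound and matches the published argument.
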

\begin{proof}
Consider the sets of positive integers $I_{1}=[k]$, $I_{2}=[k+1,r]$ and $I_{3}=[k+r+1,n]$, and their respective sets of negative integers $I_{\overline{1}}=[\overline{k},\overline{1}]$, $I_{\overline{2}}=[\overline{r},\overline{k+1}]$ and $I_{\overline{3}}=[\overline{n},\overline{k+r+1}]$. Notice that each set represent a block of positions in $w$ as shown below
\begin{align*}
w =
\underbracket{\overline{v_{n-k-r}}\, \cdots \, \overline{v_{1}}}_{I_{\overline{3}}}\,
\underbracket{\lambda_{1}\, \cdots \, \lambda_{r}}_{I_{\overline{2}}}|
\underbracket{\overline{u_{k}}\, \cdots\, \overline{u_{1}}}_{I_{\overline{1}}}\, 0\, 
\underbracket{u_1\, \cdots\, u_k}_{I_{1}}| 
\underbracket{\overline{\lambda_r}\, \cdots\, \overline{\lambda_1}}_{I_{2}}\,
\underbracket{v_1 \,\cdots\, v_{n-k-r}}_{I_{3}}.
\end{align*}

Explicitly,
\begin{align}
w(i) = 
\left\{
\begin{array}{cc}
v_{i-k-r} & \text{, for } i\in I_{3}; \\ 
\overline{\lambda_{k+r+1-i}} & \text{, for } i\in I_{2}; \\ 
u_{i} & \text{, for } i\in I_{1}; \\
0 & \text{, for } i=0;\\
\overline{u_{-i}} & \text{, for } i\in I_{\overline{1}}; \\ 
\lambda_{k+r+1+i} & \text{, for } i\in I_{\overline{2}}; \\ 
\overline{v_{-(i+k+r)}} & \text{, for } i\in I_{\overline{3}}.
\end{array} 
\right.
\end{align}

Suppose that $w$ covers $w'$, i.e., $w'\leqslant w$ and $\ell(w)=\ell(w')+1$. Proposition \ref{prop:bruhatBjor} says that there are $\overline{n} \leqslant i<j \leqslant n$ such that $w(i)>w(j)$ and either $w'=w\cdot (i,j)(-i,-j)$, if $|i|\neq |j|$, or $w'=w\cdot (i,j)$, if $|i|=|j|$.

First of all, $i=0$ if, and only if, $j=0$ since the symmetry of $\weyl_{n}^{(k)}$ implies that $w(0)=0$. Then, we will always consider non-zero $i$ and $j$.

The set $[-n,n]$ is the disjoint union $I_{\overline{3}}\cup I_{\overline{2}}\cup I_{\overline{1}} \cup \{0\}\cup I_{1} \cup I_{2} \cup I_{3}$. We will prove the theorem by checking all possible combinations of $i<j$ such that $i\in I_{m}$ and $j\in I_{l}$, for $m,l\in\{\overline{3},\overline{2},\overline{1},1,2,3\}$. Table \ref{tbl:ijchoices} encloses such information for $i$ and $j$, where the rows denote $I_{m}$ and the column denote $I_{l}$.

\begin{table}[ht]
\def\arraystretch{1.3}
\caption{Possible choices for $i<j$ such that $w(i)>w(j)$.}\label{tbl:ijchoices}
\begin{tabular}{|c|c|c|c|c|c|c|}
\hline
\diagbox{$i$}{$j$} & $I_{3}$ & $I_{2}$ & $I_{1}$ & $I_{\overline{1}}$ & $I_{\overline{2}}$ & $I_{\overline{3}}$ \\ 
\hline
$I_{3}$ & \xmark &  &  &  &  &  \\ 
\hline 
$I_{2}$ & \xmark & \xmark &  &  &  &  \\ 
\hline 
$I_{1}$ & \cellcolor{lightgray}\cmark  & \cellcolor{lightgray}\cmark & \xmark &  &  &  \\ 
\hline 
$I_{\overline{1}}$ & \xmark & \cellcolor{lightgray}\cmark  & \xmark & \xmark &  &  \\ 
\hline 
$I_{\overline{2}}$ & \cellcolor{lightgray}\cmark  & \cellcolor{lightgray}\cmark  & \cmark  & \cmark  & \xmark &  \\ 
\hline 
$I_{\overline{3}}$ & \xmark & \cmark  & \xmark & \cmark  & \xmark & \xmark \\ 
\hline 
\end{tabular} 
\end{table}

There are some choices of $i<j$ for which the relation $w(i)>w(j)$ is not satisfied. For instance, for every $i,j\in I_{3}$ such that $i<j$, we have $w(i)=v_{i-k-r} < v_{j-k-r}=w(j)$.
Empty cells in Table \ref{tbl:ijchoices} means $i>j$. A cell marked with \xmark\ means that for all $i<j$ in the respective set, we have $w(i)<w(j)$. Cells marked with \cmark\ are the ones such that we could have $i<j$ satisfying $w(i)>w(j)$. 

When $|i|\neq |j|$, we know $w'$ is obtained by swapping the values $w(i)$ and $w(j)$, and also swapping the values $w(-i)$ and $w(-j)$. Then, there is a symmetry in the choice of $i,j$. For instance, choosing $i<j$ such that $i\in I_{\overline{2}}$ and $j\in I_{\overline{1}}$ is equivalent to choose $-j<-i$ such that $-j \in I_{1}$ and $-i \in I_{2}$. It is enough to verify the case where $i$ and $j$ belongs to $I_{1}$ and $I_{2}$, respectively.

Therefore, we only have to check the five possibilities in Table \ref{tbl:ijchoices} represented by the shaded cells marked with \cmark. 

\textbf{1st case:}
Suppose that $i\in I_{1}$ and $j\in I_{3}$ such that $w(i)>w(j)$. Let $a>x>0$ be integers such that $w(i)=u_{i}=a$ and $w(j)=v_{j-k-r}=a-x$. The permutation $w'$ is obtained from $w$ by swapping $w(i)=a$ and $w(j)=a-x$, and swapping the respective negatives $w(-i)=\overline{a}$ and $w(-j)=\overline{a-x}$.
In short
\begin{align*}
w & = u_1 \, \cdots\, a \, \cdots\, u_k| \overline{\lambda_r}\, \cdots\, \overline{\lambda_1}\, v_1 \, \cdots\, (a-x) \, \cdots\, v_{n-k-r}, \\
w' &= u_1\, \cdots\, (a-x) \, \cdots\, u_k| \overline{\lambda_r}\, \cdots\, \overline{\lambda_1}\, v_1 \,\cdots\, a\, \cdots\, v_{n-k-r}.
\end{align*}

Let us compare the lengths of $w$ and $w'$. Lemma \ref{lem:B3B4}(i) says that all integers $a-x+1, \dots, a-1$ belongs to the $\lambda$'s. Then
\begin{align*}
\{v_{q}'\mid v_q'>u_i'\} & = \{v_{q}' \mid v_q'>a-x\} = \{v_{j-k-r}', v_{j-k-r+1}',\dots, v_{n-k-r}'\} \\
 & = \{a\} \cup \{v_{j-k-r+1},\dots, v_{n-k-r}\} = \{a\} \cup \{v_{q}\mid v_q>u_i\}
\end{align*}
and
\begin{align*}
\{v_{q}\mid v_q'>u_t'\} = \{v_{q}\mid v_q>u_i\}\mbox{ , for } t \in [k], t\neq i.
\end{align*}

Hence $\mu'_{i}=\mu_{i}+1$, and  $\mu'_{t}=\mu_{t}$ for $t\neq i$. It follows from Equation \eqref{eq:alpha3} that $\alpha'_{i}=\alpha_{i}-1$ and $\alpha'_{t}=\alpha_{t}$ for $t\neq i$, which implies that $|\alpha|=|\alpha'|+1$. Since $|\lambda|=|\lambda'|$, it is clear that $\ell(w)=\ell(w')+1$.

Therefore, $w,w'$ is a pair of type B3.

\vspace*{0.2cm}

\textbf{2nd case:}
Suppose that $i\in I_{1}$ and $j\in I_{2}$ such that $w(i)>w(j)$. Observe that if we swap $w(i)=u_{i}$ and $w(j)=\overline{\lambda_{j-k-r}}$, it would put a negative entry in the first $k$ positions of $w'$, which is not allowed by Equations (\ref{eq:kgrass_conds}). Therefore, this is not a valid case.

\vspace*{0.2cm}

\textbf{3rd case:}
Suppose that $i\in I_{\overline{1}}$ and $j\in I_{2}$ such that $w(i)>w(j)$. Let $a>x>0$ be integers such that $w(j)=\overline{\lambda_{k+r+1-j}}=\overline{a}$ and $w(i)=\overline{u_{-i}}=\overline{a-x}$. The permutation $w'$ is obtained from $w$ by swapping $w(i)=\overline{a-x}$ and $w(j)=\overline{a}$ and the respective negatives $w(-i)=a-x$ and $w(-j)=a$. In short
\begin{align*}
w & = u_1 \, \cdots\, (a-x) \, \cdots\, u_k| \overline{\lambda_r}\, \, \cdots\, \overline{a} \cdots \, \overline{\lambda_1}\, v_1 \, \cdots\, v_{n-k-r}, \\
w' & = u_1 \, \cdots\, a \, \cdots\, u_k| \overline{\lambda_r}\, \, \cdots\, \overline{a-x} \cdots \, \overline{\lambda_1}\, v_1 \, \cdots\, v_{n-k-r}.
\end{align*}

Let us compare the lengths of $w$ and $w'$. Lemma \ref{lem:B3B4} says that all integers $a-x+1, \dots, a-1$ are in the $v$'s. Then
\begin{align*}
\{v_{q}\mid v_q>u_{-i}\} & = \{v_{q}\mid v_q>a-x\} = \{a-x+1, \dots, a-1\} \cup\{v_{q}\mid v_q>a\} \\
 & = \{a-x+1, \dots, a-1\} \cup\{v_{q}'\mid v_q'>u_{-i}'\}
\end{align*}
and
\begin{align*}
\{v_{q}\mid v_q'>u_t'\} = \{v_{q}\mid v_q>u_i\} \quad \mbox{ , for } t\in [k], t\neq -i.
\end{align*}

Hence $\mu_{-i}=\mu_{-i}'+(x-1)$, and  $\mu'_{t}=\mu_{t}$ for $t\neq -i$. Then, it follows from Equation \eqref{eq:alpha3} that $\alpha'_{-i}=\alpha_{-i}+(x-1)$ and $\alpha'_{t}=\alpha_{t}$ for $t\neq -i$, which implies that $|\alpha|=|\alpha'|-x+1$. Since $|\lambda|=|\lambda'|+x$, it is clear that $\ell(w)=\ell(w')+1$.

Therefore, $w,w'$ is a pair of type B4.

\vspace*{0.2cm}

\textbf{4th case:}
Suppose that $i\in I_{\overline{2}}$ and $j\in I_{3}$ such that $w(i)>w(j)$. Let $a>x>0$ be integers such that $w(i)=\lambda_{k+r+1+i}=a$ and $w(j)=v_{j-k-r}=a-x$. The permutation $w'$ is obtained from $w$ by swapping $w(i)=a$ and $w(j)=a-x$, and swapping the respective negatives $w(-i)=\overline{a}$ and $w(-j)=\overline{a-x}$. In short
\begin{align*}
w & = u_1\, \cdots\, u_k| \overline{\lambda_r}\, \cdots\, \overline{a} \, \cdots\, \overline{\lambda_1}\, v_1 \, \cdots\, (a-x) \, \cdots\, v_{n-k-r}, \\
w' &= u_1\, \cdots\, u_k| \overline{\lambda_r}\, \cdots\, \overline{a-x} \, \cdots\, \overline{\lambda_1}\, v_1 \,\cdots\, a\, \cdots\, v_{n-k-r}.
\end{align*}

Let us compare the lengths of $w$ and $w'$. Since $w,w'\in \weyl_{n}^{(k)}$, all integers $a-x+1, \dots, a-1$ should be in the $u$'s (this can be proved likewise in Lemma \ref{lem:B3B4}). Denote by $p\in [0,k]$ the largest integer such that $u_{p}<a-x$ (if required, take $u_{0}=0$). Clearly, $u_{p+1}=a-x+1, \dots, u_{p+x-1}=a-1$. Then
\begin{align*}
\{v_{q}'\mid v_q'>u_t'\} & = \{v_{q}\mid v_q>u_i\} \quad \mbox{ , for every } t\in [p];\\
\{v_{q}'\mid v_q'>u_t'\} & = \{v_{q}\mid v_q>u_i\}\cup\{a\} \quad \mbox{ , for every } t\in [p+1, p+x-1];\\
\{v_{q}'\mid v_q'>u_t'\} & = \{v_{q}\mid v_q>u_i\} \quad \mbox{ , for every } t\in [p+x,k].
\end{align*}
and, hence,
\begin{align*}
\mu_{t}' = \mu_{t} +
\left\{
\begin{array}{cl}
1 & \mbox{ , if } p< t \leqslant p+x-1;\\
0 & \mbox{ , otherwise.}
\end{array} 
\right.
\end{align*}

By Equation \eqref{eq:alpha3}, 
\begin{align*}
\alpha_{t} = \alpha_{t}' +
\left\{
\begin{array}{cl}
1 & \mbox{ , if } p< t \leqslant p+x-1;\\
0 & \mbox{ , otherwise.}
\end{array} 
\right.
\end{align*}
and $|\alpha|=|\alpha'|+(x-1)$. Clearly, $|\lambda|=|\lambda'|+x$, which implies that $\ell(w)=\ell(w')+2x-1$. By hypothesis, $2x-1$ should be equal to $1$, which lead us to conclude that $x=1$.

Therefore, $w,w'$ is a pair of type B2.

\vspace*{0.2cm}

\textbf{5th case:}
Suppose that $i\in I_{\overline{2}}$ and $j\in I_{2}$ such that $w(i)>w(j)$. First of all, assume $-i < j$. Let $a>x > 0$ be integers such that $w(i)=\lambda_{k+r+1+i}=a$ and $w(j)=\overline{\lambda_{k+r-j}}=\overline{a-x}$. The permutation $w'$ is obtained from $w$ by swapping $w(i)=a$ and $w(j)=\overline{a-x}$, and the respective negatives $w(-i)=\overline{a}$ and $w(-j)=a-x$. In short
\begin{align*}
w & = u_1\, \cdots\, u_k| \overline{\lambda_r}\, \cdots\, \overline{a} \, \cdots\, \overline{a-x}\, \cdots\, \overline{\lambda_1}\, v_1 \,\cdots\, v_{n-k-r}, \\
w' &= u_1\, \cdots\, u_k| \overline{\lambda_r}\, \cdots\, (a-x) \, \cdots\, a\, \cdots\, \overline{\lambda_1}\, v_1 \,\cdots\, v_{n-k-r}.
\end{align*}

Notice that if there is some $\overline{\lambda_{m}}$ to the right of $w'(-i)=a-x$ or $a>v_{1}$ then it would not satisfy Equations \eqref{eq:kgrass_conds}. Hence we have $-i=k+r-1$, $j=k+r$ and $a<v_{1}$, and both $a-x$ and $a$ of $w'$ should be added to the $v$'s. Moreover, all integers $a-x+1, \dots, a-1$ should be in the $u$'s. Denote by $p\in [0,k]$ the largest integer such that $u_{p}<a-x$. Clearly, $u_{p+1}=a-x+1, \dots, u_{p+x-1}=a-1$.

Let us compare the lengths of $w$ and $w'$. We have that
\begin{align*}
\{v_{q}'\mid v_q'>u_t'\} & = \{v_{q}\mid v_q>u_t\}\cup \{a-x,a\} \mbox{ , for every } t\in [p]; \\
\{v_{q}'\mid v_q'>u_t'\} & = \{v_{q}\mid v_q>u_t\}\cup \{a\} \mbox{ , for every } t\in [p+1, p+x-1]; \\
\{v_{q}'\mid v_q'>u_t'\} & = \{v_{q}\mid v_q>u_t\} \mbox{ , for every } t\in [p+x, k],
\end{align*}
and, hence,
\begin{align*}
\mu_{t}' = \mu_{t} +
\left\{
\begin{array}{cl}
2 & \mbox{ , if } t \leqslant p; \\ 
1 & \mbox{ , if } p< t \leqslant p+x-1;\\
0 & \mbox{ , if } p+x-1 < t \leqslant k.
\end{array} 
\right.
\end{align*}

By Equation \eqref{eq:alpha3}, 
\begin{align*}
\alpha_{t} = \alpha_{t}' +
\left\{
\begin{array}{cl}
2 & \mbox{ , if } t \leqslant p; \\ 
1 & \mbox{ , if } p< t \leqslant p+x-1;\\
0 & \mbox{ , if } p+x-1 < t \leqslant k.
\end{array} 
\right.
\end{align*}
and $|\alpha|=|\alpha'|+2p+(x-1)$. Clearly, $|\lambda|=|\lambda'|+(a-x)+a$, which implies that $\ell(w)=\ell(w')+2a+2p-1$. By hypothesis, $2a+2p-1$ should be equal to $1$. This implies that $a=1$, $p=0$, and $x$ is an integer such that $1>x>0$. Therefore, this is not a valid case.

If $-i>j$ then we can proceed as above to show that this is also not a valid case.

Finally, suppose that $-i=j$. Let $a > 0$ be an integer such that $w(i)=\lambda_{k+r+1+i}=a$ and $w(j)=\overline{\lambda_{k+r-j}}=\overline{a}$. The permutation $w'$ is obtained from $w$ by swapping $w(i)=a$ and $w(j)=\overline{a}$. In short
\begin{align*}
w & = u_1\, \cdots\, u_k| \overline{\lambda_r}\, \cdots\, \overline{a} \, \cdots\, \overline{\lambda_1}\, v_1 \,\cdots\, v_{n-k-r}, \\
w' &= u_1\, \cdots\, u_k| \overline{\lambda_r}\, \cdots\, a \, \cdots\, \overline{\lambda_1}\, v_1 \,\cdots\, v_{n-k-r}.
\end{align*}

By Equations \eqref{eq:kgrass_conds}, $w'$ lies in $\weyl_{n}^{(k)}$ if, and only if, $-i=j=k+r$, and $a<v_{1}$. Then, $a$ should be added to the $v$'s. Denote by $p\in [0,k]$ the largest integer such that $u_{p}<a$. Let us compare the length of $w$ and $w'$. We have
\begin{align*}
\{v_{q}'\mid v_q'>u_t'\} & = \{v_{q}\mid v_q>u_t\}\cup\{a\}  \mbox{ , for every } t\in [p]; \\
\{v_{q}'\mid v_q'>u_t'\} & = \{v_{q}\mid v_q>u_t\}  \mbox{ , for every } t\in [p+1,k].
\end{align*}
and, hence,
\begin{align*}
\mu_{t}' = \mu_{t} +
\left\{
\begin{array}{cl}
1 & \mbox{ , if } t \leqslant p; \\ 
0 & \mbox{ , if } p< t \leqslant k.
\end{array} 
\right.
\end{align*}

By Equation \eqref{eq:alpha3}, 
\begin{align*}
\alpha_{t} = \alpha_{t}' +
\left\{
\begin{array}{cl}
1 & \mbox{ , if } t \leqslant p; \\ 
0 & \mbox{ , if } p< t \leqslant k.
\end{array}
\right.
\end{align*}
and $|\alpha|=|\alpha'|+ p$. Clearly, $|\lambda|=|\lambda'|+a$, which implies that $\ell(w)=\ell(w')+a+p$. By hypothesis, $a+p$ should be equal to $1$, which lead us to conclude that $a=1$ and $p=0$.

Therefore, $w,w'$ is a pair of type B1.

\vspace*{0.2cm}
Clearly, we implicitly proved the reciprocal.
\end{proof}

As consequence of Theorem \ref{thm:main}, if one starts with any permutation $w \in \weyl_{n}^{(k)}$, we establish certain conditions to determine all possible $w' \in \weyl_{n}^{(k)}$ covered by $w$.

\begin{cor}[Length-decreasing]\label{coro:dlength} Let $w\in \weyl_{n}^{(k)}$. 
\begin{enumerate}
\item If $w = \cdots\, |\, \cdots\,  \overline{1}\,  \cdots$ then $w$ covers $w' = \cdots\, |\, \cdots\,  1\,  \cdots$;
\item If $w = \cdots\, |\, \cdots\,  \overline{a}\,  \cdots \, (a-1)\, \cdots$ then $w$ covers $w' = \cdots\, |\, \cdots\, \overline{a-1}\,  \cdots \, a\, \cdots$;
\item If $w = \cdots\, a \,\cdots\, |\,  \cdots \, b\, \cdots$ where $a>b$ and all positive integers $b+1, b+2, \dots, a-1$ belong to the $\lambda$'s then $w$ covers $w' = \cdots\,  b \,\cdots \,|\, \cdots\, a\, \cdots$;
\item If $w = \cdots\, b \,\cdots\, |\,  \cdots \, \overline{a}\, \cdots$ where $a>b$ and all positive integers $b+1, b+2, \dots, a-1$ belong to the $v$'s then $w$ covers $w' = \cdots\,  a \,\cdots \,|\, \cdots\, \overline{b}\, \cdots$.
\end{enumerate}
\end{cor}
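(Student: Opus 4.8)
The plan is to read the statement off directly from Theorem \ref{thm:main}: in each of the four items we are handed a permutation $w\in\weyl_{n}^{(k)}$ matching a prescribed local pattern, and $w'$ is obtained by a single swap. So the work reduces to two routine verifications per case: first, that the word $w'$ so produced really is a $k$-Grassmannian permutation, i.e. its one-line notation satisfies the monotonicity conditions \eqref{eq:kgrass_conds}; second, that the pair $(w,w')$ then literally fits the definition of a pair of type B1, B2, B3 or B4. Granting both, Theorem \ref{thm:main} (the "if" direction) immediately gives that $w$ covers $w'$.

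For item (1), $w'$ is $w$ with the bar over $\overline{\lambda_1}=\overline{1}$ erased, which transfers the value $1$ to the front of the $v$-block. Since $1$ is a $\lambda$-value of $w$ it is distinct from every $v_i$, hence $v_i\geqslant 2$, and so the new $v$-sequence $1,v_1,v_2,\dots$ is strictly increasing while the $\lambda$-sequence merely drops its smallest entry; thus $w'\in\weyl_{n}^{(k)}$ and $(w,w')$ is a pair of type B1. Item (2) is the same kind of check: writing $\lambda_t=a$ and $v_q=a-1$, the swap puts $a-1$ into the $\lambda$-block and $a$ into the $v$-block, and because the positive values are partitioned among the $u$'s, $\lambda$'s and $v$'s, the value $a-1$ is not a $\lambda$-value of $w$ and $a$ is not a $v$-value of $w$; this forces $\lambda_{t-1}<a-1$ and $v_{q+1}>a$, so both sequences stay strictly increasing, $w'\in\weyl_{n}^{(k)}$, and $(w,w')$ is a pair of type B2. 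In either case Theorem \ref{thm:main} finishes the argument.

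For item (3), set $u_p=a$ and $v_q=b$, so $x:=a-b$ satisfies $a>x>0$; the candidate $w'$ interchanges $a$ (in a $u$-position) with $b$ (in a $v$-position). For $w'$ to satisfy \eqref{eq:kgrass_conds} one needs exactly $u_{p-1}<b$ and $v_{q+1}>a$, i.e. no $u$- or $v$-value of $w$ lies in the open interval $(b,a)$; since each of $b+1,\dots,a-1$ is a $u$-, $\lambda$- or $v$-value, this is precisely the hypothesis that $b+1,\dots,a-1$ all belong to the $\lambda$'s. Hence $w'\in\weyl_{n}^{(k)}$ and $(w,w')$ is a pair of type B3, so $w$ covers $w'$ by Theorem \ref{thm:main}. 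Item (4) is identical with the roles of the $\lambda$'s and $v$'s swapped: with $u_p=b$ and $\lambda_t=a$, the swapped word lies in $\weyl_{n}^{(k)}$ if and only if no $u$- or $\lambda$-value of $w$ lies in $(b,a)$, which is exactly the stated requirement that $b+1,\dots,a-1$ be $v$-values; then $(w,w')$ is a pair of type B4 and the conclusion again follows from Theorem \ref{thm:main}.

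The one point carrying any content beyond bookkeeping is the remark that the interval hypotheses in (3) and (4) are tailored precisely to keep $w'$ inside $\weyl_{n}^{(k)}$, and I expect the only (minor) obstacle to be stating that monotonicity check cleanly; the rest is a direct invocation of Theorem \ref{thm:main}. Note also that no converse is needed here, since the corollary asserts only the length-decreasing direction, which is one half of that theorem.
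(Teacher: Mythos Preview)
Your proposal is correct and follows essentially the same approach as the paper's own proof: verify that the swap keeps $w'$ inside $\weyl_{n}^{(k)}$, recognize the resulting pair as type B1--B4, and then invoke Theorem~\ref{thm:main}. The paper's argument is terser (it simply asserts that $w'\in\weyl_{n}^{(k)}$, and for (3)--(4) notes that the interval hypothesis is what guarantees this), but the logic is identical to yours; you have merely spelled out the monotonicity checks that the paper leaves as ``clearly.''
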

\begin{proof}
For statements (1) and (2), clearly $w'$ belongs to $\weyl_{n}^{(k)}$, which implies that $w,w'$ are pairs of type B1 or B2, respectively.

For (3), the condition of $b+1, b+2, \dots, a-1$ belong to the $\lambda$'s guarantees that $w'\in\weyl_{n}^{(k)}$. Then, $w,w'$ is a pair of type B3. The same argument holds for (4), concluding that $w,w'$ is a pair of type B4.
\end{proof}

In some sense, Corollary \ref{coro:dlength} combines the results of Theorem \ref{thm:main} and Lemma \ref{lem:B3B4}.

\begin{ex}\label{ex:covering}
Consider $w=2\,5\,6|\overline{8}\,\overline{7}\,\overline{4}\,\overline{1}\,3$ where $n=8$ and $k=3$. 
Let us determine $w'\in \weyl_8^{(3)}$ such that $w$ covers $w'$ following the Corollary \ref{coro:dlength}. It is immediate that $w$ covers $2\,5\,6|\overline{8}\,\overline{7}\,\overline{4}\,1\,3$ which is a pair of type B1.

To get a pair of type B2, we should pick an entry $\overline{a}$ in the $\lambda$'s and an entry $(a-1)$ in the $u$'s. The only possible choice is the pair of entries $\overline{4}$ and $3$ which gives that $w$ covers $2\,5\,6|\overline{8}\,\overline{7}\,\overline{3}\,\overline{1}\,4$.

To get a pair of type B3, we should pick an entry $a$ in the $u$'s and an entry $b$ in the $v$'s such that $a>b$ and all positive integers $b+1, b+2, \dots, a-1$ belong to the $\lambda$'s. Choosing $5$ and $3$ gives a covering since $4$, which is the only integer between $b=3$ and $a=5$, is contained in the $\lambda$'s. Hence $w$ covers $2\,3\,6|\overline{8}\,\overline{7}\,\overline{4}\,\overline{1}\,5$. Choosing $6$ and $3$ does not give a covering since $5$, which is a number between $b=3$ and $a=6$, does not belong to the $\lambda$'s.

Finally, to get a pair of type B4, we should pick an entry $b$ in the $u$'s and an entry $\overline{a}$ in the $\lambda$'s such that $a>b$ and all positive integers $b+1, b+2, \dots, a-1$ belong to the $v$'s. We only have two pairs of entries that satisfy such conditions: $b=6$ and $\overline{a}=\overline{7}$, which does not have integers between them; and $b=2$ and $\overline{a}=\overline{4}$ since $3$, which is the only integer between $b$ and $a$ is in the $v$'s. It gives that $w$ covers $4\,5\,6|\overline{8}\,\overline{7}\,\overline{2}\,\overline{1}\,3$ and $2\,5\,7|\overline{8}\,\overline{6}\,\overline{4}\,\overline{1}\,3$, respectively.

Putting these cases together we have the following five covering pairs:
\begin{align*}
w=2\,5\,6|\overline{8}\,\overline{7}\,\overline{4}\,\mathbf{\overline{1}}\,3 \mbox{ and } w'_{1}=2\,5\,6|\overline{8}\,\overline{7}\,\overline{4}\, \mathbf{1} \,3 \mbox{ of type } B1; \\
w=2\,5\,6|\overline{8}\,\overline{7}\,\mathbf{\overline{4}}\,\overline{1}\,\mathbf{3} \mbox{ and } w'_{2}= 2\,5\,6|\overline{8}\,\overline{7}\,\mathbf{\overline{3}}\,\overline{1} \,\mathbf{4} \mbox{ of type } B2; \\
w=2\,\mathbf{5}\,6|\overline{8}\,\overline{7}\,\overline{4}\,\overline{1}\,\mathbf{3} \mbox{ and } w'_{3}=2\, \mathbf{3}\,6|\overline{8}\,\overline{7}\,\overline{4}\,\overline{1} \, \mathbf{5} \mbox{ of type } B3; \\
w=\mathbf{2}\,5\,6|\overline{8}\,\overline{7}\,\mathbf{\overline{4}}\,\overline{1}\,3 \mbox{ and } w'_{4}= \mathbf{4}\,5\,6|\overline{8}\,\overline{7}\,\mathbf{\overline{2}}\,\overline{1} \,3 \mbox{ of type } B4; \\
w=2\,5\,\mathbf{6}|\overline{8}\,\mathbf{\overline{7}}\,\overline{4}\,\overline{1}\,3 \mbox{ and } w'_{5}= 2\,5\,\mathbf{7}|\overline{8}\,\mathbf{\overline{6}}\,\overline{4}\,\overline{1}\,3 \mbox{ of type } B4.
\end{align*}
\end{ex} 
 
We also have a similar version of Corollary \ref{coro:dlength} where we start with any permutation $w' \in \weyl_{n}^{(k)}$ and we want to determine all possible $w \in \weyl_{n}^{(k)}$ that cover $w'$.

\begin{cor}[Length-increasing] Let $w'\in \weyl_{n}^{(k)}$. 
\begin{enumerate}
\item If $w' = \cdots\, |\, \cdots\,  1\,  \cdots$ then $w = \cdots\, |\, \cdots\,  \overline{1}\,  \cdots$ covers $w'$;
\item If $w' = \cdots\, |\, \cdots\,  \overline{a-1}\,  \cdots \, a\, \cdots$ then $w = \cdots\, |\, \cdots\, \overline{a}\,  \cdots \, (a-1)\, \cdots$ covers $w'$;
\item If $w' = \cdots\, b \,\cdots\, |\,  \cdots \, a\, \cdots$ where $a>b$ and all positive values $b+1, b+2, \dots, a-1$ belong to the $\lambda$'s then $w = \cdots\,  a \,\cdots \,|\, \cdots\, b\, \cdots$ covers $w'$;
\item If $w' = \cdots\, a \,\cdots\, |\,  \cdots \, \overline{b}\, \cdots$ where $a>b$ and all positive values $b+1, b+2, \dots, a-1$ belong to the $v$'s then $w = \cdots\,  b \,\cdots \,|\, \cdots\, \overline{a}\, \cdots$ covers $w'$.
\end{enumerate}
\end{cor}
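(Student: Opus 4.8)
\emph{Plan.} The plan is to deduce the statement directly from Theorem \ref{thm:main}, reading its four covering types ``from the point of view of $w'$'' in exactly the way Corollary \ref{coro:dlength} reads them from the point of view of $w$; indeed the argument will be the mirror image of the proof of Corollary \ref{coro:dlength}, with the roles of $w$ and $w'$ interchanged. For each of the four items I would exhibit the candidate $w$ produced from $w'$ by the prescribed local move --- attaching a bar to the entry $1$ in item (1); interchanging the $\lambda$-entry $\overline{a-1}$ with the $v$-entry $a$ in item (2); interchanging the $u$-entry $b$ with the $v$-entry $a$ in item (3); interchanging the $u$-entry $a$ with the $\lambda$-entry $\overline{b}$ in item (4) --- and then carry out two checks: (a) that this $w$ actually lies in $\weyl_{n}^{(k)}$, i.e.\ that its one-line notation still satisfies the monotonicity conditions \eqref{eq:kgrass_conds}; and (b) that, once $w\in\weyl_{n}^{(k)}$ is known, the pair $(w,w')$ visibly has the defining shape of a pair of type B1, B2, B3, or B4 respectively. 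Step (b) is immediate from the definitions of the four types (and in items (3) and (4) the hypothesis on the intermediate integers is literally the condition furnished by Lemma \ref{lem:B3B4}), after which Theorem \ref{thm:main} gives at once that $w$ covers $w'$.

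The only step carrying any content is (a), the membership $w\in\weyl_{n}^{(k)}$. In items (1) and (2) this is essentially free: in (1) the value $1$ is the least positive integer, so after barring it it becomes $\lambda_{1}=1$ and all the inequalities in \eqref{eq:kgrass_conds} persist; in (2), since $a$ is a $v$-entry of $w'$ the value $a$ is not among the $\lambda$-values of $w'$, and since $\overline{a-1}$ is a $\lambda$-entry the value $a-1$ is among neither the $u$- nor the $v$-values, so replacing $a-1$ by $a$ in the $\lambda$-block and $a$ by $a-1$ in the $v$-block keeps both blocks strictly increasing. In items (3) and (4) I would argue that the hypothesis ``every integer in $[b+1,a-1]$ is a $\lambda$-value (respectively a $v$-value) of $w'$'' forces that no $u$-value and no $v$-value (respectively no $u$-value and no $\lambda$-value) of $w'$ lies strictly between $b$ and $a$; hence sliding $b$ up to $a$ in one block while sliding $a$ (resp.\ $\overline{a}$) down to $b$ (resp.\ $\overline{b}$) in the other block preserves strict monotonicity, so \eqref{eq:kgrass_conds} still holds and $w\in\weyl_{n}^{(k)}$.

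I expect the membership check in items (3) and (4) to be the one mildly delicate point, and even it is routine bookkeeping of which values occupy the $u$-, $\lambda$- and $v$-blocks; everything else is a one-line appeal to Theorem \ref{thm:main}. One should also keep the standing conventions in mind --- in item (2) one reads $a\geqslant 2$ so that $a-1$ is a genuine positive entry, and in items (3) and (4) the interval $[b+1,a-1]$ may be empty --- but these impose no real constraint on the argument.
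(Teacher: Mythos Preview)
Your proposal is correct and mirrors exactly what the paper does: the paper states this corollary without proof, introducing it only as ``a similar version of Corollary \ref{coro:dlength}'', so the intended argument is precisely the one you give --- verify $w\in\weyl_{n}^{(k)}$ via \eqref{eq:kgrass_conds}, observe that the resulting pair has the shape of B1--B4, and invoke Theorem \ref{thm:main}. One small remark: when you say the hypothesis in (3) and (4) is ``literally the condition furnished by Lemma \ref{lem:B3B4}'', note that Lemma \ref{lem:B3B4} speaks of the $\lambda$'s and $v$'s of $w$, whereas your hypothesis is on those of $w'$; this is harmless since in type B3 the $\lambda$-blocks of $w$ and $w'$ coincide and in type B4 the $v$-blocks coincide, but it is worth a word if you write the argument out in full.
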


\section{Maya diagram and dual permutations}\label{sec:maya}

The Maya diagram of a permutation $w$ in $\weyl_{n}^{(k)}$ is a row of $n$ boxes where each box is marked with a symbol $\circ$, $\bullet$, or $\times$ as following: the integers $u_{1},\dots, u_{k}$ are the positions of the boxes with $\circ$, $\lambda_{1},\dots, \lambda_{r}$ are the positions of the boxes with $\bullet$, and $v_1,\dots, v_{n-k-r}$ are the positions of the vacant boxes which will be marked with $\times$. For instance, the permutation $w= 2\, 5\, 6| \overline{8}\, \overline{7}\, \overline{4}\, \overline{1}\, 3$ in $\weyl_{8}^{(3)}$ is denoted as
$$
\begin{young}[13pt]
\bullet & \circ & \times & \bullet & \circ & \circ & \bullet & \bullet
\end{young}
$$
\vspace*{0cm}

The longest element $w_{0}$ of $\weyl^{(k)}_{n}$ is the $k$-Grassmannian permutation given by 
\begin{align*}
w_{0}=1\,2\,\cdots\, k| \overline{n\vphantom{1}}\,\overline{n-1}\, \cdots\, \overline{k+1}.
\end{align*}

Next lemma states some properties of Maya diagrams that can be easily obtained using the definition.

\begin{lem}\label{lem:mayaprop} \ \\
\vspace*{-.5cm}
\begin{enumerate}
\item The identity permutation $e$ in $ \weyl_{n}^{(k)}$ is represented as
\vspace*{-.3cm}
$$
\begin{young}[13pt]
 , \scriptstyle{1} & ,& , \scriptstyle k  & , \scriptstyle{k+1} & ,& , \scriptstyle n \\
\circ & \cdotsss & \circ & \times & \cdotsss & \times
\end{young}
$$

\item The longest element $w_{0} $ in $ \weyl_{n}^{(k)}$ is represented as
\vspace*{-.3cm}
$$
\begin{young}[13pt]
 , \scriptstyle{1} & ,& , \scriptstyle k  & , \scriptstyle{k+1} & ,& , \scriptstyle n \\
\circ & \cdotsss & \circ & \bullet & \cdotsss & \bullet
\end{young}
$$

\item For every permutation $w$ of $\weyl_{n}^{(k)}$,  the Maya diagram of $w$ contains exactly $k$ boxes marked with $\circ$;

\item Let $w\in\weyl_{n}^{(k)}$. Given $i\in [k]$, the integer $\mu_{i}$ is the number of vacant boxes $\times$ to the right of the $i$-th box marked with $\circ$.

\end{enumerate}
\end{lem}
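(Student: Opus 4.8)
The plan is to verify the four statements by directly unwinding the block description \eqref{eq:kgrass} of $k$-Grassmannian permutations together with the definition of the Maya diagram; as the paper notes, each is immediate once the conventions \eqref{eq:kgrass_conds} are taken into account, so the proof is pure bookkeeping.

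First I would dispatch (1) and (2). The identity $e$ is the minimal-length element of its coset, hence lies in $\weyl_n^{(k)}$, and in the form \eqref{eq:kgrass} it reads $e = 1\,2\,\cdots\,k\,|\,(k+1)\,(k+2)\,\cdots\,n$, i.e. $u_i=i$ for $1\le i\le k$, $r=0$, and $v_i=k+i$ for $1\le i\le n-k$; placing $\circ$ at $u_1,\dots,u_k=1,\dots,k$ and $\times$ at $v_1,\dots,v_{n-k}=k+1,\dots,n$ with no $\bullet$ gives exactly the claimed picture. For (2), the displayed form $w_0 = 1\,2\,\cdots\,k\,|\,\overline{n}\,\overline{n-1}\,\cdots\,\overline{k+1}$ recalled just before the lemma gives $u_i=i$ for $1\le i\le k$, $\lambda_i=k+i$ for $1\le i\le n-k$ (so $r=n-k$), and no $v$'s, whence $\circ$ occupies positions $1,\dots,k$ and $\bullet$ occupies positions $k+1,\dots,n$; one can also confirm via Lemma \ref{lem:dim} that this permutation has $\alpha$ filling the $k\times(n-k)$ rectangle (all $\mu_i=0$) and $\lambda=(n,n-1,\dots,k+1)$, so its length is indeed maximal in $\weyl_n^{(k)}$.

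Statements (3) and (4) are then read off from the ordering in \eqref{eq:kgrass_conds}. For (3), the $\circ$-boxes sit precisely at positions $u_1<u_2<\cdots<u_k$, which are $k$ distinct elements of $[n]$, so the diagram has exactly $k$ of them. For (4), since $0<u_1<\cdots<u_k$, scanning the diagram left to right the $i$-th box marked $\circ$ is the one at position $u_i$; the $\times$-boxes are exactly those at positions $v_1,\dots,v_{n-k-r}$, so the number of $\times$-boxes strictly to the right of position $u_i$ equals $\#\{\, j : v_j>u_i\,\}=\mu_i$ by the definition of $\mu_i$ given after \eqref{eq:ui} (and consistent with $\alpha_i=n-k-\mu_i$ in \eqref{eq:alpha3}). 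The only subtlety worth flagging is this ordering remark — that \eqref{eq:kgrass_conds} forces $u_1,\dots,u_k$ to list the $\circ$-positions in increasing order, so that "the $i$-th $\circ$-box" unambiguously means the box at $u_i$ — and beyond that I anticipate no obstacle.
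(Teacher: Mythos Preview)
Your proposal is correct and matches the paper's approach: the paper gives no proof, stating only that the properties ``can be easily obtained using the definition,'' and your argument is precisely the direct unwinding of \eqref{eq:kgrass}--\eqref{eq:kgrass_conds} and the definition of the Maya diagram that this remark is pointing to. There is nothing to add.
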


We can use assertion (4) of Lemma \ref{lem:mayaprop} to compute the length of $w$ in its Maya Diagram. By Lemma \ref{lem:dim}, it is the sum of $\alpha$'s and $\lambda$'s. The $\lambda$'s corresponds to the sum of the positions of the $\bullet$'s. The computation of the $\alpha$'s follows by Equation \eqref{eq:alpha3} which says that $\alpha_{i}=n-k-\mu_{i}$. For instance, if $w= 2\,5\,6| \overline{8}\, \overline{7}\, \overline{4}\, \overline{1}\, 3$ then we have that 
$$
i=1: \quad
\begin{young}[13pt]
\bullet & !\circ & \times & \bullet & \circ & \circ & \bullet & \bullet
\end{young}
\quad \rightarrow\quad \mu_{1} = 1;\\
$$
$$
i=2: \quad
\begin{young}[13pt]
\bullet & \circ & \times & \bullet & !\circ & \circ & \bullet & \bullet
\end{young}
\quad \rightarrow\quad \mu_{2} = 0;\\
$$
$$
i=3: \quad
\begin{young}[13pt]
\bullet & \circ & \times & \bullet & \circ & ! \circ & \bullet & \bullet
\end{young}
\quad \rightarrow\quad \mu_{3} = 0.\\
$$

Hence, $\alpha_1=5-1=4, \alpha_2=\alpha_3=5$, $\lambda_1=1, \lambda_2=4, \lambda_3=7$, and $\lambda_4=8$ such that $\ell(w)=34$.

Given $w,w'\in \weyl_{n}^{(k)}$ such that $w$ covers $w'$, we also can denote the four types of pairs using the Maya diagram as following:
\begin{itemize}
\item {Type B1:} $w$ should contain $\bullet$ in the first position, while $w'$ contains $\times$ in the first position. This pair can be represented as
\vspace*{-.3cm}
$$
w=\begin{young}[13pt]
, \scriptstyle{1} \\
\bullet & ]= \cdotsss
\end{young}
\quad \mbox{ and }\quad
w'=\begin{young}[13pt]
, \scriptstyle{1} \\
\times & ]= \cdotsss
\end{young}
$$
\item {Type B2:} $w$ should contain $\bullet$ in position $a$ and $\times$ in position $a-1$, while $w'$ contains $\times$ in position $a$ and $\bullet$ in position $a-1$. This pair can be represented as
\vspace*{-.3cm}
$$
w=\begin{young}[13pt]
, & , \scriptstyle{a-1}& , \scriptstyle a \\
== \cdotsss & ]= \times & ]= \bullet & ]= \cdotsss
\end{young}
\quad \mbox{ and }\quad
w'=\begin{young}[13pt]
, & ,  \scriptstyle{a-1}& , \scriptstyle a\\
== \cdotsss & ]= \bullet & ]= \times & ]= \cdotsss
\end{young}
$$
\item {Type B3:} $w$ should contain $\circ$ in position $a$, $\times$ in position $a-x$, while $w'$ contains $\times$ in position $a$, $\circ$ in position $a-x$. Moreover, by Lemma \ref{lem:B3B4}, both contain $\bullet$ in all positions between $a-x$ and $a$. This pair can be represented as
\vspace*{-.3cm}
$$
w=\begin{young}[13pt]
, & ,  \scriptstyle{a-x} & ,  & , &, & , \scriptstyle a \\
=] \cdotsss & ; \times & \bullet &  \cdotsss & \bullet & ; \circ & ]= \cdotsss
\end{young}
\quad \mbox{ and }\quad
w'=\begin{young}[13pt]
, & ,  \scriptstyle{a-x} & ,  & , &, & , \scriptstyle a \\
=] \cdotsss & ; \circ & \bullet &  \cdotsss & \bullet & ; \times & ]= \cdotsss
\end{young}
$$
\item {Type B4:} $w$ should contain $\bullet$ in position $a$, $\circ$ in position $a-x$, while $w'$ contains $\circ$ in position $a$, $\bullet$ in position $a-x$. Moreover, by Lemma \ref{lem:B3B4}, both contain $\times$ in all positions between $a-x$ and $a$. This pair can be represented as
\vspace*{-.3cm}
$$
w=\begin{young}[13pt]
, & ,  \scriptstyle{a-x}& ,  & , &, & , \scriptstyle a \\
=] \cdotsss & ; \circ & \times &  \cdotsss & \times  & ; \bullet & ]= \cdotsss
\end{young}
\quad \mbox{ and }\quad
w'=\begin{young}[13pt]
, & ,  \scriptstyle{a-x}& ,  & , &, & , \scriptstyle a \\
=] \cdotsss & ; \bullet & \times &  \cdotsss & \times  & ; \circ & ]= \cdotsss
\end{young}
$$
\end{itemize}
\vspace*{0.3cm}

Using Maya diagrams to represent these pairs also give us an easier way to identify each type of pairs. In fact, if $w$ is any permutation in $\weyl_{n}^{(k)}$ then we can find all  permutations $w'$ covered by $w$ merely looking for the above patterns in the Maya diagram of $w$. In other words, we could easily rewrite Corollary \ref{coro:dlength} in terms of Maya diagrams.

For instance, consider $w= 2\, 5\, 6| \overline{8}\, \overline{7}\, \overline{4}\, \overline{1}\, 3\in\weyl_{8}^{(3)}$ of Example \ref{ex:covering}. We can obtain the same five covering pairs using only the above patterns of $w$ as it follows:
$$
B1:
\quad
w=
\begin{young}[13pt]
!\bullet & \circ & \times & \bullet & \circ & \circ & \bullet & \bullet
\end{young}
\quad \mbox{and} \quad
w'_{1}=
\begin{young}[13pt]
!\times & \circ & \times & \bullet & \circ & \circ & \bullet & \bullet
\end{young}
$$
$$
B2:
\quad
w=
\begin{young}[13pt]
\bullet & \circ & !\times & !\bullet & \circ & \circ & \bullet & \bullet
\end{young}
\quad \mbox{and} \quad
w'_{2}=
\begin{young}[13pt]
\bullet & \circ & !\bullet & !\times & \circ & \circ & \bullet & \bullet
\end{young}
$$
$$
B3:
\quad
w=
\begin{young}[13pt]
\bullet & \circ & !\times & \bullet & !\circ & \circ & \bullet & \bullet
\end{young}
\quad \mbox{and} \quad
w'_{3}=
\begin{young}[13pt]
\bullet & \circ & !\circ & \bullet & !\times & \circ & \bullet & \bullet
\end{young}
$$
$$
B4:
\quad
w=
\begin{young}[13pt]
\bullet & !\circ & \times & !\bullet & \circ & \circ & \bullet & \bullet
\end{young}
\quad \mbox{and} \quad
w'_{4}=
\begin{young}[13pt]
\bullet & !\bullet & \times & !\circ & \circ & \circ & \bullet & \bullet
\end{young}
$$
$$
B4:
\quad
w=
\begin{young}[13pt]
\bullet & \circ & \times & \bullet & \circ & !\circ & !\bullet & \bullet
\end{young}
\quad \mbox{and} \quad
w'_{5}=
\begin{young}[13pt]
\bullet & \circ & \times & \bullet & \circ & !\bullet & !\circ & \bullet
\end{young}
$$
\vspace*{0.cm}

Given $w\in \weyl^{(k)}_{n}$, define $w^{\vee}=w w_{0}$ the \emph{dual permutation of $w$}. Notice that the action of $w_{0}$ on $w$ will reverse position and sign of the last $(n-k)$ positions of $w$. In other words, if $w$ is written as in Equation \eqref{eq:kgrass}, the one-line notation of the dual permutation of $w$ is
\begin{align*}
w^{\vee} = u_1\,\cdots\, u_k | \overline{v_{n-k-r}}\,\cdots\, \overline{v_{1}}\, \lambda_{1}\,\cdots\, \lambda_{r}.
\end{align*}
Clearly, we have $w^{\vee}\in \weyl^{(k)}_{n}$. The Maya diagram of $w^{\vee}$ is given by replacing all $\bullet$'s of $w$ by $\times$'s, and replacing all $\times$'s of $w$ by $\bullet$'s. For instance, the dual of $w= 2\, 5\, 6| \overline{8}\, \overline{7}\, \overline{4}\, \overline{1}\, 3$ is the permutation $w^{\vee}=2\,5\,6|\overline{3}\, 1\,4\,7\,8$ and the Maya diagram is
$$
\begin{young}[13pt]
\times & \circ & \bullet  & \times & \circ & \circ & \times & \times
\end{young}
$$

Let us compute the length of $w^{\vee}$.
\begin{lem}
The length of $w^{\vee}$ is $\ell(w^{\vee})=\ell(w_{0})-\ell(w)$.
\end{lem}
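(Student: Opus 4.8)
The plan is to evaluate both sides using the combinatorial length formula of Lemma~\ref{lem:dim}, and to exploit the observation that $\ell(w)+\ell(w^{\vee})$ will turn out to be independent of $w$, so that the constant can be pinned down by a single specialization rather than by comparing two closed forms. First I would record the pair of partitions attached to $w^{\vee}$. Writing $w$ as in Equation~\eqref{eq:kgrass}, the dual $w^{\vee}=u_1\cdots u_k\,|\,\overline{v_{n-k-r}}\cdots\overline{v_1}\,\lambda_1\cdots\lambda_r$ has the same $u$-block as $w$, its strict partition is built from the $v$'s, and its ``$v$-block'' is $\lambda_1,\dots,\lambda_r$. Hence, writing $\alpha^{\vee},\lambda^{\vee}$ for the partitions of $w^{\vee}$ and $\mu^{\vee}_i=\mu_i(w^{\vee})$, we get $\mu^{\vee}_i=\#\{\lambda_j\mid\lambda_j>u_i\}=d_i$ in the notation of Equation~\eqref{eq:alpha1}, so by Equation~\eqref{eq:alpha3} we have $\alpha^{\vee}_i=n-k-d_i$. (Here one uses that $w^{\vee}\in\weyl_{n}^{(k)}$, already noted in the text, so that Lemma~\ref{lem:dim} applies to $w^{\vee}$.)

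Next I would add the two lengths. Since $\alpha_i=n-k-\mu_i$ and $\alpha^{\vee}_i=n-k-d_i$, Equation~\eqref{eq:ui}, which reads $u_i=n-k+i-d_i-\mu_i$, gives
\[
\alpha_i+\alpha^{\vee}_i=2(n-k)-(d_i+\mu_i)=(n-k)-i+u_i,
\]
so $|\alpha|+|\alpha^{\vee}|=k(n-k)-\binom{k+1}{2}+\sum_{i=1}^{k}u_i$. On the other hand, the set $\{u_1,\dots,u_k\}\cup\{\lambda_1,\dots,\lambda_r\}\cup\{v_1,\dots,v_{n-k-r}\}$ equals $[n]$, whence $|\lambda|+|\lambda^{\vee}|=\sum_i\lambda_i+\sum_j v_j=\binom{n+1}{2}-\sum_{i=1}^{k}u_i$. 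Adding and applying Lemma~\ref{lem:dim} to $w$ and to $w^{\vee}$, the $\sum u_i$ terms cancel and
\[
\ell(w)+\ell(w^{\vee})=k(n-k)-\binom{k+1}{2}+\binom{n+1}{2},
\]
which is independent of $w$.

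Finally I would fix the constant by specializing to $w=e$, the identity of $\weyl_{n}^{(k)}$: then $e^{\vee}=e\,w_0=w_0$ and $\ell(e)=0$, so the displayed constant equals $\ell(w_0)$, and therefore $\ell(w^{\vee})=\ell(w_0)-\ell(w)$ for every $w\in\weyl_{n}^{(k)}$. As a consistency check one may instead compute $\ell(w_0)$ directly from $w_0=1\,2\cdots k\,|\,\overline{n}\cdots\overline{k+1}$, where all $\mu_i=0$ and $\lambda_i=k+i$, obtaining $\ell(w_0)=2k(n-k)+\binom{n-k+1}{2}$, and verifying this agrees with the constant above. I do not expect a genuine obstacle here: the only care required is in correctly reading off the partitions of $w^{\vee}$ and in the elementary index bookkeeping; the one piece of mild cleverness is noticing that $\ell(w)+\ell(w^{\vee})$ is constant in $w$, which reduces the identity to a one-line evaluation at the identity element.
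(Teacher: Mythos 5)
Your proof is correct and follows essentially the same route as the paper: both read off $\alpha^{\vee}_i=n-k-d_i$ and $\lambda^{\vee}=(v_1,\dots,v_{n-k-r})$ from the dual permutation and then use Equation \eqref{eq:ui} to show that $\ell(w)+\ell(w^{\vee})$ is a constant independent of $w$. The only (cosmetic) difference is in identifying that constant: the paper computes $\ell(w_0)=\tfrac12(n+3k+1)(n-k)$ directly from the type-B length formula and matches closed forms, whereas you specialize to $w=e$ and use $e^{\vee}=w_0$, which is a slightly tidier final step.
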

\begin{proof}
First of all, let us compute the length of $w_{0}$. Using Equation \eqref{eq:length} we can easily show that $\ell(w_{0})=\frac{1}{2}(n+3k+1)(n-k)$ since $\mathrm{inv}(w_{0}(1),\dots, w_{0}(n))=k(n-k)$ and
\begin{align*}
-\sum_{\mathclap{\{j\mid w(j)<0\}}}\ w(j)=\frac{1}{2}(n+k+1)(n-k).
\end{align*}
Notice that the partitions associated with $w^{\vee}$ are $\alpha_{i}^{\vee}=n-k-\mu_{i}(w^{\vee})=n-k-d_{i}$ for $i\in [k]$ and $\lambda_{i}^{\vee}=v_{i}$ for $i\in[n-k-r]$. Using Equation \eqref{eq:ui}, we have
\begin{align*}
\ell(w)+\ell(w^{\vee}) & = \sum_{i=1}^{k} \alpha_{i} + \sum_{i=1}^{r} \lambda_{i} + \sum_{i=1}^{k} \alpha_{i}^{\vee} + \sum_{i=1}^{\mathclap{n-k-r}} \lambda_{i}^{\vee}\\
& = \sum_{i=1}^{k} (n-k-\mu_{i}) + \sum_{i=1}^{r} \lambda_{i} + \sum_{i=1}^{k} (n-k-d_{i}) + \sum_{i=1}^{\mathclap{n-k-r}} v_{i}\\
& = \sum_{i=1}^{k} (n-k) + \sum_{i=1}^{r} \lambda_{i} + \sum_{i=1}^{k} (n-k-d_{i}-\mu_{i}) + \sum_{i=1}^{\mathclap{n-k-r}} v_{i}\\
& = k(n-k) + \sum_{i=1}^{r} \lambda_{i} + \sum_{i=1}^{k} (u_{i}-i) + \sum_{i=1}^{\mathclap{n-k-r}} v_{i}\\
& = k(n-k) + \left(\sum_{i=1}^{r} \lambda_{i} + \sum_{i=1}^{k} u_{i} + \sum_{i=1}^{\mathclap{n-k-r}} v_{i}\right) - \sum_{i=1}^{k} i\\
& = k(n-k) + \sum_{i=1}^{n} i - \sum_{i=1}^{k} i = k(n-k) + \sum_{\mathclap{i=k+1}}^{n} i  \\
& = k(n-k) + \frac{1}{2}(n+k+1)(n-k) = \frac{1}{2}(n+3k+1)(n-k).
\end{align*}

Hence, $\ell(w)+\ell(w^{\vee})=\ell(w_{0})$.
\end{proof}

\begin{cor}
$w'\leqslant w$ if, and only if, $w^{\vee}\leqslant (w')^{\vee}$. Moreover, $\ell(w)-\ell(w')=\ell((w')^{\vee})-\ell(w^{\vee})$. In particular, $w$ covers $w'$ if, and only if, $(w')^{\vee}$ covers $w^{\vee}$.
\end{cor}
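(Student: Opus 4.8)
The plan is to establish the three assertions --- the length identity, the order equivalence, and the covering statement --- in that order, each reducing to facts already available. The length identity is immediate from the previous lemma: since $\ell(x^{\vee})=\ell(w_{0})-\ell(x)$ for every $x\in\weyl_{n}^{(k)}$, we obtain
\[
\ell((w')^{\vee})-\ell(w^{\vee})=\bigl(\ell(w_{0})-\ell(w')\bigr)-\bigl(\ell(w_{0})-\ell(w)\bigr)=\ell(w)-\ell(w').
\]

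The core of the proof is the equivalence $w'\leqslant w\iff w^{\vee}\leqslant(w')^{\vee}$, which I would reduce to the full group $\weyl_{n}$. Let $W_{0}$ denote the longest element of $\weyl_{n}$, so that $W_{0}(i)=\overline{i}$ for all $i$ and right multiplication by $W_{0}$ flips the sign of every entry. A short computation --- sorting $wW_{0}$ into the minimal-length form dictated by Equations (\ref{eq:kgrass})--(\ref{eq:kgrass_conds}) --- shows that the minimal coset representative of $wW_{0}$ is precisely $w^{\vee}$; that is, $w^{\vee}=\pi(wW_{0})$ with $\pi\colon\weyl_{n}\to\weyl_{n}^{(k)}$ the projection. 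Now combine two standard facts: $x\mapsto xW_{0}$ is an order-reversing bijection of $(\weyl_{n},\leqslant)$ (Bj\"orner--Brenti \cite{Bjo}), so $w'\leqslant w$ iff $wW_{0}\leqslant w'W_{0}$; and $\pi$ preserves the Bruhat order (\cite{stemb}, as recalled in the Remark above). Applying the latter to $wW_{0}\leqslant w'W_{0}$ gives $w^{\vee}=\pi(wW_{0})\leqslant\pi(w'W_{0})=(w')^{\vee}$, which is one implication. For the converse I would use that $\vee$ is an involution on $\weyl_{n}^{(k)}$ --- clear from the one-line form of $w^{\vee}$, since $\vee$ merely interchanges the roles of the $\lambda$'s and the $v$'s and fixes the $u$'s --- so that feeding the pair $(w^{\vee},(w')^{\vee})$ into the implication just proved returns $w'\leqslant w$. (Recall that the Bruhat order on $\weyl_{n}^{(k)}$ is the restriction of the one on $\weyl_{n}$, so no ambiguity arises.)

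Finally, the covering statement is a formal consequence: $w$ covers $w'$ means $w'\leqslant w$ and $\ell(w)-\ell(w')=1$, which by the order equivalence and the length identity is the same as $w^{\vee}\leqslant(w')^{\vee}$ and $\ell((w')^{\vee})-\ell(w^{\vee})=1$, i.e. $(w')^{\vee}$ covers $w^{\vee}$. The one place that needs care is the identification $w^{\vee}=\pi(wW_{0})$: one must check that sorting $wW_{0}$ really yields the declared one-line form of $w^{\vee}$ (positive entries of the first block increasing, and in the second block the negated old $v$'s preceding the old $\lambda$'s). Everything else is formal. As a sanity check --- and an independent combinatorial route to the ``in particular'' clause --- one can read off from Theorem \ref{thm:main} that $\vee$ acts on Maya diagrams by exchanging $\bullet\leftrightarrow\times$ while fixing $\circ$, hence permutes the four covering types by B1$\leftrightarrow$B1, B2$\leftrightarrow$B2, B3$\leftrightarrow$B4, so that $w$ covers $w'$ iff $(w')^{\vee}$ covers $w^{\vee}$; chaining this along a maximal chain then recovers the full order equivalence as well.
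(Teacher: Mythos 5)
Your proof is correct. The paper in fact states this corollary with no proof at all (only the length identity is prepared for, by the preceding lemma), so there is nothing to compare against; your route --- writing $w^{\vee}=\pi(wW_{0})$ for $W_{0}$ the longest element of $\weyl_{n}$, then combining the order-reversal of right multiplication by $W_{0}$ with the order-preservation of the projection $\pi$ and the involutivity of $\vee$ --- is the standard argument and supplies exactly the details the authors omit. The one step you rightly flag does check out: $wW_{0}$ negates every entry of $w$, and sorting the first $k$ entries to be positive increasing and the last $n-k$ entries to be increasing gives precisely $u_1\cdots u_k|\overline{v_{n-k-r}}\cdots\overline{v_1}\,\lambda_1\cdots\lambda_r=w^{\vee}$, consistent with the factorization $W_{0}=w_{0}\,w_{0,(k)}$ with $w_{0,(k)}$ the longest element of $\weyl_{(k)}$.
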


Then, the duality of a permutation also imply a duality over the covering pairs. The next proposition states a duality among the type of pairs.

\begin{prop} Let $w, w $ be permutations in $ \weyl^{(k)}_{n}$ such that $w$ covers $w'$. Then
\begin{enumerate}
\item $w,w'$ is a pair of type B1 if, and only if, $(w')^{\vee}, w^{\vee}$ is a pair of type B1;
\item $w,w'$ is a pair of type B2 if, and only if, $(w')^{\vee}, w^{\vee}$ is a pair of type B2;
\item $w,w'$ is a pair of type B3 if, and only if, $(w')^{\vee}, w^{\vee}$ is a pair of type B4;
\end{enumerate}
\end{prop}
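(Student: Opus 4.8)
The plan is to work entirely with Maya diagrams. I would use two facts recalled in Section~\ref{sec:maya}: the Maya diagram of $w^{\vee}$ is obtained from that of $w$ by interchanging every $\bullet$ with a $\times$ and leaving every $\circ$ untouched; and, by the preceding corollary, $w$ covers $w'$ exactly when $(w')^{\vee}$ covers $w^{\vee}$, so that $(w')^{\vee},w^{\vee}$ is again a covering pair to which Theorem~\ref{thm:main} applies. I would also observe that $w\mapsto w^{\vee}$ is an involution (because $w_{0}^{2}=1$), so it is enough to prove the ``only if'' direction of each equivalence: the converse then follows by applying the proven implication to the pair $(w')^{\vee},w^{\vee}$ and using $((w')^{\vee})^{\vee}=w'$ and $(w^{\vee})^{\vee}=w$.

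For (1) and (2) the check would be immediate from the Maya pictures displayed in Section~\ref{sec:maya}. In a type B1 pair, $w$ has $\bullet$ in box $1$ and $w'$ has $\times$ in box $1$; after dualizing, $(w')^{\vee}$ has $\bullet$ in box $1$ and $w^{\vee}$ has $\times$ in box $1$, and since $(w')^{\vee}$ covers $w^{\vee}$ this is precisely the B1 pattern for the pair $(w')^{\vee},w^{\vee}$. The same one-line computation on the adjacent boxes $a-1,a$ handles B2. The point worth recording is that here the symbol transposition $\bullet\leftrightarrow\times$ induced by $\vee$ and the reversal of the covering order cancel one another, which is why B1 and B2 are self-dual.

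For (3), starting from a type B3 pair I would invoke Lemma~\ref{lem:B3B4}(i): the Maya diagram has $\circ$ in box $a$ and $\times$ in box $a-x$ for $w$, the reverse for $w'$, and $\bullet$ in every strictly interior box for both. Dualizing (fix $\circ$, swap $\bullet\leftrightarrow\times$) turns this into $\circ$ in box $a$ and $\bullet$ in box $a-x$ for $w^{\vee}$, the reverse for $(w')^{\vee}$, and $\times$ in every interior box for both; since $(w')^{\vee}$ covers $w^{\vee}$, comparing with Section~\ref{sec:maya} shows that $(w')^{\vee},w^{\vee}$ is a type B4 pair with the same $a$ and $x$. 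Running the mirror-image computation from a type B4 pair produces a type B3 pair, and combined with the involutivity of $\vee$ this yields the full equivalence in (3) and finishes the proof.

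I do not anticipate a genuine obstacle: the whole content is the observation that $\vee$ fixes $\circ$ and transposes $\bullet$ and $\times$. The one place requiring care is the bookkeeping of the covering order, namely that duality sends the pair $(w,w')$ to $((w')^{\vee},w^{\vee})$ rather than to $(w^{\vee},(w')^{\vee})$; this order-reversal is exactly what forces B3 and B4 to interchange while leaving B1 and B2 fixed.
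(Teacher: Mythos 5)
Your proposal is correct and follows exactly the route the paper intends: the paper's proof is a one-line appeal to comparing the Maya diagrams of $w,w'$ and $(w')^{\vee},w^{\vee}$, and you simply carry out that comparison explicitly, using that $\vee$ fixes $\circ$, swaps $\bullet\leftrightarrow\times$, reverses the covering order, and is an involution.
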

\begin{proof}
This result can be easily obtained using the pairs $w,w'$ and $(w')^{\vee},w^{\vee}$ represented in the Maya diagrams.
\end{proof}

For instance, $w=2\,\mathbf{5}\,6|\overline{8}\,\overline{7}\,\overline{4}\,\overline{1}\,\mathbf{3}$ and $w'_{3}=2\, \mathbf{3}\,6|\overline{8}\,\overline{7}\,\overline{4}\,\overline{1} \, \mathbf{5}$ from Example \ref{ex:covering} is a pair of type B3, whereas the dual $(w'_{3})^{\vee}=2\,\mathbf{3}\,6|\mathbf{\overline{5}}\, 1\,4\,7\,8$ and $w^{\vee}=2\,\mathbf{5}\,6|\mathbf{\overline{3}}\, 1\,4\,7\,8$ is a pair of type B4.

Let $w,w'\in\weyl_{n}^{(k)}$. We write $w'\to w$ if $w$ covers $w'$. The Bruhat graph is the graph such that the set of vertices is $\weyl_{n}^{(k)}$ and the (oriented)
arrows are the covering relation for the Bruhat order.

\begin{ex}
Let $n=4$ and $k=2$. Figure \ref{fig:n4k2} represent the Bruhat graph for $\weyl_{4}^{(2)}$. The dual of a permutation in such graph is obtained by the reflection through the horizontal dashed line. The type of each covering is denoted using different arrow styles: doted arrows are pairs of type B1; dashed arrows are pairs of type B2; thinner arrows are pairs of type B3; and thicker arrows are pairs of type B4.

\begin{figure}[ht]
\centering
\includegraphics[scale=0.8]{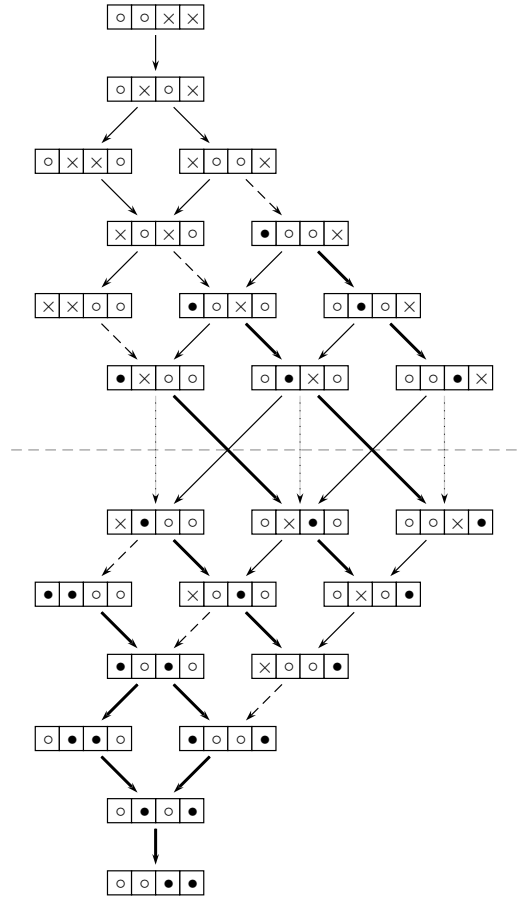}
\caption{Bruhat graph for $n=4$ and $k=2$.}
\label{fig:n4k2}
\end{figure}
\end{ex}

\end{document}